\newcommand{\bx}{\begin{bmatrix}}
\newcommand{\ex}{\end{bmatrix}}
\newtheorem{theorem}{Theorem}
      \newtheorem{lemma}[theorem]{Lemma}
      \newtheorem{prop}[theorem]{Proposition}
\theoremstyle{definition}
      \newtheorem{observation}[theorem]{Observation}
      \newtheorem{definition}[theorem]{Definition}
\title{Hurwitz Equivalence of Reflection Factorizations in $G_7$}
\author{Tyler Minnick, Colin Pirillo, Sarah Racile, Yueqi Wang}
\begin{document}

\maketitle

\begin{abstract}
We prove that two reflection factorizations of a given element in an exceptional rank-$2$ complex reflection group of tetrahedral type are Hurwitz-equivalent if and only if they generate the same subgroup and have the same multiset of conjugacy classes.
\end{abstract}


\section{Introduction}

Given a group $G$, a \emph{Hurwitz move} on position $i$ of the factorization $T = (g_1, \ldots, g_n) \in G^n$ is defined as the following operation:
\[ \sigma_i(T) = (g_1, \ldots, g_{i-1},\quad g_{i+1},\quad g_{i+1}^{-1}g_i g_{i+1},\quad g_{i+2}, \ldots, g_n). \]
Two factorizations are \emph{Hurwitz equivalent} if one can be obtained by applying Hurwitz moves to the other, and the set of all factorizations Hurwitz equivalent to $T$ is called the \emph{Hurwitz orbit} of $T$ (see Section \ref{Hurwitz Action} for more on Hurwitz equivalence).

The main result in this paper is inspired by the following theorem on Hurwitz equivalence between factorizations of elements in dihedral groups.
\begin{theorem}[{Berger \cite[Theorem 2.1]{Berger}}]
\label{berger_theorem}
Let $G$ be a dihedral group and let $T$ and $V$ be length-$n$ factorizations whose elements belong to $G$. Then $T$ and $V$ are Hurwitz equivalent if and only if they factor the same element, generate the same subgroup $S$ of $G$, and have the same multiset of conjugacy classes of $S$.
\end{theorem}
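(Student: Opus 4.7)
The forward direction is routine bookkeeping. Each Hurwitz move preserves the ordered product, since $g_ig_{i+1} = g_{i+1}\cdot(g_{i+1}^{-1}g_ig_{i+1})$; preserves the generated subgroup $S=\langle g_1,\ldots,g_n\rangle$, since the new entries $g_{i+1}$ and $g_{i+1}^{-1}g_ig_{i+1}$ lie in $S$ and the move is invertible; and preserves the multiset of $S$-conjugacy classes, since the only changed entry is conjugated by $g_{i+1}\in S$.

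For the converse, I would replace $G$ by the common subgroup $S=\langle T\rangle=\langle V\rangle$ (this is harmless because Hurwitz equivalence only uses conjugations by entries of the factorization) and then split on the structure of $S$. Every subgroup of a dihedral group is either cyclic or dihedral, giving two subcases.

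If $S$ is cyclic, hence abelian, then $\sigma_i$ reduces to swapping $g_i$ and $g_{i+1}$, so the Hurwitz orbit of $T$ is literally the $S_n$-orbit of $T$. Equal multisets of elements — which is exactly equal multisets of (singleton) $S$-conjugacy classes in the abelian case — then imply Hurwitz equivalence.

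The substantive case is when $S$ is dihedral, say $S=\langle r,s\mid r^m=s^2=(sr)^2=1\rangle$. Here I would proceed by induction on the length $n$, using three structural features of $S$: (a) any two rotations commute, so they can be transposed freely by $\sigma_i$; (b) conjugation by any reflection inverts every rotation, so a rotation can be moved past a reflection without changing its $S$-conjugacy class, because $\{r^i,r^{-i}\}$ is already one $S$-class; and (c) any two reflections in the same $S$-class differ by an even power of $r$, so a pair of reflections can be replaced via $\sigma_i$ by another pair with the same product and the same conjugacy types. Combining these, the plan is to use Hurwitz moves to push all rotations to one end and order them canonically (using (a), (b)), and then to normalize the remaining reflection subword. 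This reduces the problem to: any two reflection factorizations of the same element with the same multiset of reflection classes are Hurwitz equivalent. I would prove this last statement by induction on the number of reflections, using (c) to match the first reflection of each factorization (adjusting by rotations absorbed into the tail) and then invoking the inductive hypothesis. The main obstacle is the reflection-only sub-problem: in $D_m$ with $m$ even there are two reflection classes, and one has to be careful that the required matching of first reflections is actually achievable — this comes down to a parity/index analysis within $D_m$, which is a finite case check but must be done explicitly to ensure the three invariants really do suffice.
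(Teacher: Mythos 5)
First, a point of comparison: the paper does not prove this statement at all --- it is imported verbatim as Theorem 2.1 of Berger \cite{Berger} and used as a black box in the dihedral case of the main theorem --- so there is no internal proof to measure yours against; the only ``correct'' answer matching the paper is a citation. Judged on its own terms, your forward direction and your cyclic case are fine, and the overall plan (pass to the common subgroup $S$, separate rotations from reflections, induct on length) is the natural one. But the proposal has a genuine gap exactly where you flag ``the main obstacle'': the reflection-only sub-problem is the entire content of the theorem, and nothing you have written settles it.

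Two concrete issues. (1) Your reduced statement --- ``any two reflection factorizations of the same element with the same multiset of reflection classes are Hurwitz equivalent'' --- silently drops the hypothesis that the factorizations generate the same subgroup, and without it the statement is false: in $D_{2\times 4}$ the tuples $(s,s)$ and $(sr^2,sr^2)$ factor the identity and consist of reflections that are conjugate in $D_{2\times 4}$, yet each is a fixed point of the Hurwitz action (its entries commute), so they are inequivalent; they are separated only by $\langle s\rangle\neq\langle sr^2\rangle$. Moreover, after you push the rotations to one end, the reflection block alone need not generate $S$, so the subgroup invariant cannot simply be re-imposed on the reduced problem; it has to be tracked through the reduction. (2) The matching step is not ``a finite case check.'' The Hurwitz orbit of a pair of reflections $(f_1,f_2)$ with product $\rho=f_1f_2$ is exactly $\{(f_1\rho^k,\,f_1\rho^{k+1}) : k\in\mathbb{Z}\}$, so the reflections reachable in a given slot form a coset $f_1\langle\rho\rangle$; whether the leading reflections of $T$ and $V$ can be made equal therefore depends on which rotations arise as products of adjacent reflection pairs, which is controlled by the generated subgroup and by the parity structure of the two reflection classes when $m$ is even. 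Carrying that analysis through for all $m$ and all lengths is the actual argument of Berger's paper, and it is absent here.
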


In this paper we study Hurwitz equivalence in finite complex reflection groups, as suggested by Lewis-Reiner \cite{LewisReiner}. Complex reflection groups are defined as groups generated by elements called reflections (see Section \ref{crg_background}), and dihedral groups, generated by real-valued reflections, are a special case. Our main theorem shows that Berger's necessary and sufficient conditions for Hurwitz equivalence also hold in $G_7$, the largest tetrahedral rank-2 complex reflection group (defined in Section \ref{g7_bg}).

\begin{restatable}[Main Theorem]{theorem}{main}
\label{main theorem}
Let $T$ and $V$ be any two length-$n$ reflection factorizations of elements in $G_7$. Then $T$ and $V$ are Hurwitz equivalent if and only if they factor the same element, generate the same subgroup of $G_7$, and have the same multiset of conjugacy classes.
\end{restatable}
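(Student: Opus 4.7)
The forward direction---that Hurwitz equivalence implies equality of products, generated subgroups, and conjugacy-class multisets---is immediate from the definition of a Hurwitz move, so my plan concentrates on the converse. The strategy I would pursue proceeds in two stages: a reduction to the case that $T$ and $V$ generate the same reflection subgroup $S \leq G_7$, followed by a case analysis on the isomorphism type of $S$.

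First, I would enumerate the reflection subgroups of $G_7$ up to conjugacy. These consist of the trivial group, the cyclic subgroups generated by a single reflection (of order $2$ or $3$), the dihedral subgroups, and the proper exceptional tetrahedral subgroups $G_4$, $G_5$, and $G_6$, together with $G_7$ itself. Whenever $T$ and $V$ generate such an $S$, I may treat them as factorizations inside $S$ and apply the theorem to $S$. For cyclic $S$ there is essentially nothing to prove; for dihedral $S$ I invoke Berger's Theorem \ref{berger_theorem}; and for the proper exceptional subgroups I would either cite existing results or verify the analog of the main theorem separately, using the same methods applied to the smaller groups.

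This leaves the principal case, where $S = G_7$. Here the plan is to fix the product $w \in G_7$ and induct on the length $n$ of the factorization. The base case is the reduced length, where I would establish (or cite) the fact that all reduced reflection factorizations of a given element of $G_7$ form a single Hurwitz orbit, using a fixed-space or \emph{mov}-space argument of the kind developed for well-generated complex reflection groups. For the inductive step, I would show that any non-reduced factorization can be brought by Hurwitz moves into a normal form in which two adjacent entries lie in a common proper reflection subgroup $S' \subsetneq G_7$; applying the (inductively known) theorem to the smaller group $S'$ to replace that pair, together with a splicing argument, should reduce the bookkeeping on the remaining entries to a lower-length instance of the same problem.

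The main obstacle I anticipate is this inductive step. In $G_7$, reflections come in two different orders ($2$ and $3$), so the natural notion of a ``removable pair'' $(t, t^{-1})$ is genuinely more subtle than in the real (dihedral) setting: a pair $(r, r^{-1})$ with $r$ of order $3$ is not a repeated entry but two distinct reflections in the same conjugacy class, which interacts nontrivially with the multiset bookkeeping. Showing that the conjugacy-class multiset plus the generated subgroup is enough to recover the Hurwitz orbit therefore hinges on a careful analysis of how length-$2$ subfactorizations in $G_7$ behave under the Hurwitz action. Since $|G_7| = 144$ is moderate, I expect to close out the finite set of base cases, and the ``pair exchange'' identities needed to drive the induction, by a direct enumeration that is at worst computer-assisted; the theoretical work is then to package this finite verification into the inductive framework above.
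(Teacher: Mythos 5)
Your overall skeleton---forward direction from the definition of a Hurwitz move, case analysis on the generated reflection subgroup, Berger for the dihedral case, and an induction on length with computer-verified base cases---matches the paper's architecture. But there are two genuine gaps. First, your enumeration of reflection subgroups omits $G(4,2,2)$ (which is neither cyclic, dihedral, nor exceptional; it lives in the infinite family $G(m,p,n)$ and needs its own argument in the paper) as well as the non-cyclic abelian subgroups $\mathbb{Z}_2\times\mathbb{Z}_2$ and $\mathbb{Z}_3\times\mathbb{Z}_3$. Second, and more seriously, your proposed base case for the $S=G_7$ induction---that all reduced reflection factorizations of a given element form a single Hurwitz orbit---is false as stated: two reduced factorizations of the same element can have different multisets of conjugacy classes (or generate different subgroups) and are then provably inequivalent, since Hurwitz moves preserve both invariants. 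The correct base statement must carry the multiset and subgroup hypotheses, and there is no off-the-shelf transitivity result to cite for arbitrary elements of $G_7$; the paper verifies these low-length cases by direct computation.

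The deeper missing idea is the mechanism of the inductive step. Your plan---find two adjacent entries lying in a common proper reflection subgroup, replace that pair via the theorem for the smaller group, and ``splice''---does not reduce the length of the factorization, does not explain how the two factorizations $T$ and $V$ get matched to one another, and does not control the key constraint: after peeling off a factor, the remaining prefix must still generate the \emph{same} subgroup $G$, or the inductive hypothesis no longer applies. The paper resolves exactly this with the notion of a \emph{class-searchable} group (Definition \ref{class_searchable}): once the length exceeds the number of reflection conjugacy classes, some class $K$ is represented at least twice in both $T$ and $V$, and class-searchability lets one Hurwitz-move a \emph{common} chosen element $x\in K$ into the last position of each while keeping the first $n-1$ entries generating all of $G$. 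Both prefixes then factor $gx^{-1}$, generate $G$, and share a multiset, so induction applies (Lemma \ref{main_lemma}). Establishing class-searchability for $G_4$, $G_5$, $G_6$, $G(4,2,2)$, and $G_7$ is where the real work lies (Propositions \ref{last-place}, \ref{prop:g5-last-place}, \ref{G422_strong_last_place}, \ref{G6_strong_last_place}, \ref{G7_strong}), and your proposal contains no substitute for it.
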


Section \ref{proof_section} contains proofs that the more complicated subgroups of $G_7$ are what we call \emph{class-searchable}, a property which we then use in Lemma \ref{main_lemma} to prove Theorem \ref{main theorem}. Section \ref{standard_forms} outlines a method to categorize reflection factorizations, creating standard factorizations for each length and class of element being factored.

\subsection*{Acknowledgments}
We would like to thank Joel Lewis for his continued mentorship throughout the duration of this project.

\section{Background}
\label{background}

\subsection{Complex reflection groups}
\label{crg_background}

We begin with some background on complex reflection groups, using the terminology and notation found in Lehrer--Taylor \cite{LehrerTaylor}.

Let $V$ be a complex vector space. Given a linear transformation $t: V\rightarrow V$, the \emph{fixed space} of $t$ is the set of elements $v \in V$ such that $t(v) = v$. We call $t$ a \emph{reflection} if its fixed space has dimension $\dim(V) - 1$. A finite group $G$ of linear transformations on $V$ is called a \emph{complex reflection group} if it is generated by its subset $R$ of reflections. Further, \emph{reflection subgroups} of $G$ are defined as subgroups of $G$ that are generated by reflections in $G$. The reflections in a complex reflection group $G$ may be partitioned into conjugacy classes of $G$, which we call \emph{reflection conjugacy classes} of the group.

Every complex reflection group is a direct product of \emph{irreducible} groups. The irreducibles were classified by Shephard and Todd \cite{ShephardTodd} as the following: every irreducible is either a member of an infinite family $G(m,p,n)$ of complex reflection groups, or one of 34 exceptional groups $G_4, \ldots, G_{37}$. Groups in the infinite family are of the form
\[
G(m, p, n) = \left\{\begin{array}{l} n \times n \text{ monomial matrices whose nonzero entries are}\\m\text{th} \text{ roots of unity with product a } \frac{m}{p}\text{th} \text{ root of unity}\end{array} \right\}
\]
for positive integers $m$, $p$, $n$ with $p \mid m$.

Let $Z(G)$ denote the center of $G$. The structure of an exceptional complex reflection group $G$ is called \emph{tetrahedral} if the quotient group $G/Z(G)$ is isomorphic to the alternating group $\text{Alt}(4)$, which represents the rotations of a tetrahedron. The group $G_7$ is the highest-order tetrahedral exceptional group, and it has reflection subgroups isomorphic to the rest of the tetrahedral exceptional groups.

\subsection{The group $G_7$} \label{g7_bg}

We now describe the structure of $G_7$ in more detail. The group is generated by three reflections $t$, $u$, and $s$, and is defined \cite[Section 5.8]{Shi} as
\[ G_7 = \left\langle s,t,u \mid s^2 = t^3 = u^3 = 1 \text{, } stu = ust = tus \right\rangle.\]
Concretely, we can take these generators to be the matrices
\[
s = \begin{bmatrix} 1 & 0 \\ 0 & -1\end{bmatrix}, \qquad 
t =  \frac{1}{4} \begin{bmatrix} (1 + \sqrt{3}) + (-1 + \sqrt{3})i & (1 + \sqrt{3}) + (-1 + \sqrt{3})i \\ (-1 + \sqrt{3}) - (1 + \sqrt{3})i &  (1 - \sqrt{3}) + (1 + \sqrt{3})i \end{bmatrix}, \qquad
u = t^\top.
\]
There are $144$ elements in $G_7$, 22 of which are reflections. These are partitioned into five reflection conjugacy classes, which we label $R_1$, $R_1^{-1}$, $R_2$, $R_2^{-1}$, and $S$.
The generators $s$, $t$, and $u$ belong to $S$, $R_1$, and $R_2$ respectively. The elements of $R_1^{-1}$ are the inverses of the elements in $R_1$, and the same is true for $R_2^{-1}$ and $R_2$. The class $S$ contains six reflections of order $2$, and the remaining reflection conjugacy classes each have four reflections of order $3$.

Each proper reflection subgroup of $G_7$ is a complex reflection group isomorphic to one of the following groups: the abelian groups $\mathbb{Z}_2$, $\mathbb{Z}_3$, $\mathbb{Z}_2\times\mathbb{Z}_2$, and $\mathbb{Z}_3\times\mathbb{Z}_3$, the dihedral group $D_{2\times 4}$, the complex reflection group $G(4, 2, 2)$, or the exceptional complex reflection groups $G_4$, $G_5$, and $G_6$. Figure \ref{classes picture} describes the reflection subgroup structure of $G_7$.

\begin{figure}\label{classes picture}
\begin{tikzpicture}[point/.style={rectangle, draw=blue!95, fill=white!1, very    thick, minimum size=5mm},
  ]
  \node (g7) at (-0.5,2) [point] {\parbox{4cm}{\centering $G_7$ \\ ($S$,         $R_1$, $R_1^{-1}$, $R_2$, $R_2^{-1}$)}};
  \node (g5) at (-5.5,0) [point] {\parbox{3.5cm}{\centering$G_5$ \\             ($R_1$, $R_1^{-1}$, $R_2$, $R_2^{-1}$)}}
      edge [-] (g7);
  \node (g4_1) at (-3.5,-2) [point] {\parbox{1.8cm}{\centering $G_4$ \\         ($R_1$, $R_1^{-1}$)}}
      edge [-] (g5);
  \node (g4_2) at (-0.5,-2) [point] {\parbox{1.8cm}{\centering $G_4$ \\           ($R_2$, $R_2^{-1}$)}}
      edge [-] (g5);
  \node (g6_1) at (-0.5,0) [point] {\parbox{2.3cm}{\centering $G_6$ \\           ($S$, $R_1$, $R_1^{-1}$)}}
      edge [-] (g4_1)
      edge [-] (g7);
  \node (g6_2) at (4, 0) [point] {\parbox{2.3cm}{\centering $G_6$ \\           ($S$, $R_2$, $R_2^{-1}$)}}
      edge [-] (g7)
      edge [-] (g4_2);
  \node (g422) at (4,-2) [point] {\parbox{3cm}{\centering $G(4, 2, 2)$         \\ ($S = S_1\cup S_2\cup S_3$)}}
      edge [-] (g6_1)
      edge [-] (g6_2);
  \node (d8_1) at (1.5,-4) [point] {\parbox{1.8cm}{\centering                   $D_{2\times4}$ \\ ($S_1,S_2$)}}
      edge [-] (g422);
  \node (d8_2) at (4,-4) [point] {\parbox{1.8cm}{\centering                   $D_{2\times4}$ \\ ($S_1,S_3$)}}
      edge [-] (g422);
  \node (d8_3) at (6.5,-4) [point] {\parbox{1.8cm}{\centering                   $D_{2\times4}$ \\ ($S_2,S_3$)}}
      edge [-] (g422);
  \node (z2z2_1) at (1.5,-6) [point] {$\mathbb{Z}_2\times\mathbb{Z}_2$}
      edge [-](d8_1)
      edge [-](d8_2);
  \node (z2z2_2) at (4,-6) [point] {$\mathbb{Z}_2\times\mathbb{Z}_2$}
      edge [-](d8_1)
      edge [-](d8_3);
  \node (z2z2_3) at (6.5,-6) [point] {$\mathbb{Z}_2\times\mathbb{Z}_2$}
      edge [-](d8_2)
      edge [-](d8_3);
  \node (z2_1) at (1.5,-8) [point] {$\mathbb{Z}_2$}
      edge [dashed] node[left] {$\times 2$} (z2z2_1);
  \node (z2_2) at (4,-8) [point] {$\mathbb{Z}_2$}
      edge [dashed] node[left] {$\times 2$} (z2z2_2);
  \node (z2_3) at (6.5,-8) [point] {$\mathbb{Z}_2$}
      edge [dashed] node[left] {$\times 2$} (z2z2_3);
  \node (z3z3) at (-5.5, -6) [point] {$\mathbb{Z}_3\times\mathbb{Z}_3$}
      edge [dashed] node[left] {$\times 4$} (g5);
  \node (z3_1) at (-3.5,-8) [point] {$\mathbb{Z}_3$}
      edge [dashed] node[left] {$\times 4$} (g4_1)
      edge [dashed] (z3z3);
  \node (z3_2) at (-0.5,-8) [point] {$\mathbb{Z}_3$}
      edge [dashed] node[left] {$\times 4$} (g4_2)
      edge [dashed] (z3z3);

\end{tikzpicture}
\caption{This picture shows the reflection subgroups of $G_7$, with each edge connecting a group to its reflection subgroups. Isomorphic copies of the same group are distinguished by the list of reflection conjugacy classes shown in parentheses. The multiplicities on the dashed edges represent how many distinct copies of the subgroup exist in the parent group.}
\end{figure}
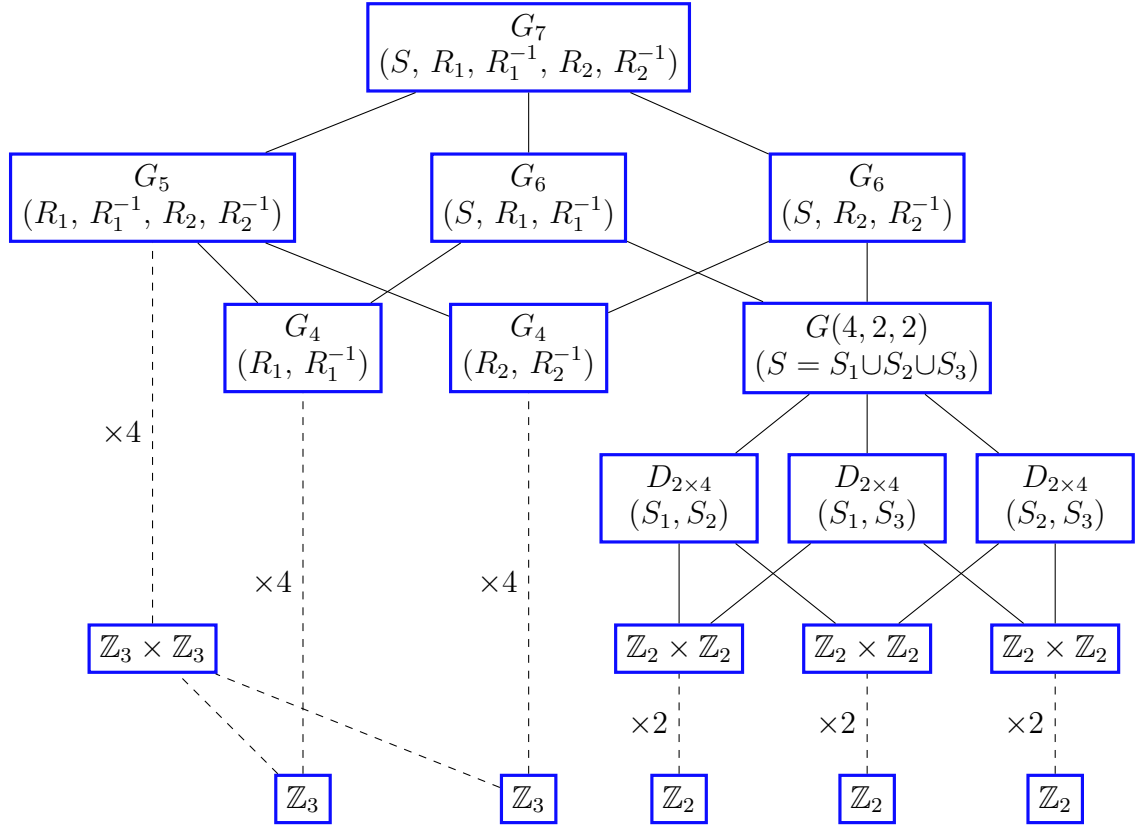

Each reflection in $S$ generates a copy of $\mathbb{Z}_2$, and each reflection in $R_1$, $R_1^{-1}$, $R_2$, or $R_2^{-1}$ generates a copy of $\mathbb{Z}_3$. This results in eight unique copies of $\mathbb{Z}_3$, as the elements in inverse classes will generate the same copies. Next, each pair of inverse elements in $R_1 \cup R_1^{-1}$ commutes with a unique pair of inverses in $R_2 \cup R_2^{-1}$, and four total copies of $\mathbb{Z}_3 \times \mathbb{Z}_3$ in $G_7$ are generated this way. There are three distinct copies of $\mathbb{Z}_2 \times \mathbb{Z}_2$ in $G_7$, generated by pairing commuting elements in $S$; and there are three copies of $D_{2\times 4}$, generated by combinations of the copies of $\mathbb{Z}_2 \times \mathbb{Z}_2$.

The groups $G_4$, $G_5$, and $G_6$ are exceptional complex reflection group with a tetrahedral structure. The group $G_4$ is generated by $\{ t, sts \}$, contains 24 elements, and has a total of 8 reflections each of order 3. These reflections are partitioned into two conjugacy classes $R_1$ and $R_1^{-1}$ (equal to the conjugacy classes of $G_7$ with the same name, mentioned above). A second copy of $G_4$ is generated by $\{u, sus\}$ and has reflection conjugacy classes $R_2$ and $R_2^{-1}$.

The group $G_5$ is generated by $\{t, u\}$ and contains 72 elements, including 16 reflections of order 3. The reflection conjugacy classes of $G_5$ are $R_1$, $R_1^{-1}$, $R_2$, and $R_2^{-1}$.

The group $G(4, 2, 2)$ is an irreducible complex reflection group generated by \\ $\{ s, tst^{-1}, t^{-1}st\}$, and is a member of the infinite family described in Section \ref{crg_background}. The elements of $G(4,2,2)$ are sixteen $2\times2$ monomial matrices whose nonzero entries are $\pm i$ and $\pm 1$, such that the nonzero entries multiply to $\pm1$. Six of these elements are reflections of order two, split evenly into three reflection conjugacy classes $S_1$, $S_2$, and $S_3$.

The group $G_6$ is generated by $\{ s, t\}$. It contains 48 elements, with 8 reflections of order 3 and 6 reflections of order 2. The reflections of order 3 are partitioned into two conjugacy classes, $R_1$, and $R_1^{-1}$, while the reflections of order 2 are all in the same conjugacy class, $S$. A second copy of $G_6$ is generated by $\{s, u\}$, with reflection conjugacy classes $S$, $R_2$, and $R_2^{-1}$.

\subsection{Hurwitz moves}
\label{Hurwitz Action}

Suppose $g$ is an element in a complex reflection group $G$. A \emph{reflection factorization} of $g$ is a tuple $T = (r_1,\ldots,r_n)$ of reflections such that $g = r_1\cdots  r_n$. The \emph{length} of the factorization is $n$. The set $\{r_1, \ldots, r_n\}$ generates some subgroup $S$ of $G$, so we say the factorization $T$ \emph{generates} $S$.
The \emph{multiset of conjugacy classes} of $T$ is the collection of conjugacy classes of $S$ whose elements appear in $T$, where the multiplicity of each class is the number of elements in $T$ belonging to that class.

Let $T=(r_1,\ldots,r_n)$ be a reflection factorization of an element $g$ in a complex reflection group. The \emph{Hurwitz move} $\sigma_i$ is an operation defined on a position $i$ in $T$, with $1 \leq i < n$, given by
$$
\sigma_i(T) = (r_1,\ldots, r_{i-1}, \quad r_{i+1}, \quad r_{i+1}^{-1} r_i r_{i+1}, \quad r_{i+2}, \ldots,r_n).
$$
Importantly, $r_{i+1}^{-1} r_i r_{i+1}$ is a reflection, and the product of the reflections in $\sigma_i(T)$ equals $g$, and so $\sigma_i(T)$ is another reflection factorization of $g$. Furthermore, $r_i$ and $r_{i+1}^{-1} r_i r_{i+1}$ belong to the same conjugacy class, so the multiset of conjugacy classes is preserved by Hurwitz moves.

The \emph{Hurwitz orbit} of a factorization $T$ is the set of all factorizations which can be produced by applying Hurwitz moves to $T$. Two factorizations belonging to the same Hurwitz orbit are said to be \emph{Hurwitz equivalent}.

\section{Class-searchable groups and proof of main theorem} \label{proof_section}

Our main theorem is restated here for convenience and with the added context of the definitions in Section \ref{background}.

\main*

It is true in general that if $T$ and $V$ are Hurwitz equivalent reflection factorizations with elements in any complex reflection group $G$, then $T$ and $V$ factor the same element, generate the same subgroup $H$ of $G$, and have the same multiset of conjugacy classes of $H$. Therefore, it is only necessary to prove that the converse holds in $G_7$. Our proof is by cases based on the subgroup of $G_7$ generated by $T$ and $V$. The dihedral case follows from Theorem \ref{berger_theorem}. The abelian case is handled by the following proposition. 

\begin{prop} \label{abelian_case}
Suppose $G$ is an abelian group, and let $T$ and $V$ be length-$n$ reflection factorizations which generate $G$, factor the same element of $G$, and have the same multiset of conjugacy classes. Then $T$ and $V$ are Hurwitz equivalent.
\end{prop}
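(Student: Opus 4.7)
The plan is to observe that the abelian setting trivializes both the conjugacy-class condition and the Hurwitz action, reducing the statement to the fact that adjacent transpositions generate the symmetric group.

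First I would note that, since $G$ is abelian, every conjugacy class of $G$ (and of any subgroup, in particular of $G$ itself, which $T$ and $V$ generate) is a singleton. Consequently, the hypothesis that $T$ and $V$ have the same multiset of conjugacy classes is equivalent to the statement that they have the same underlying multiset of reflections; that is, $V$ is obtained from $T$ by permuting the entries. (The hypotheses that $T$ and $V$ factor the same element and generate the same subgroup are then automatic and unused.)

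Second I would observe that, because $G$ is abelian, for any reflections $r_i, r_{i+1}$ appearing in a factorization we have $r_{i+1}^{-1} r_i r_{i+1} = r_i$, so the Hurwitz move $\sigma_i$ becomes simply the swap
\[
\sigma_i(r_1,\ldots,r_n) = (r_1,\ldots,r_{i-1},\, r_{i+1},\, r_i,\, r_{i+2},\ldots,r_n).
\]
Thus the Hurwitz action on length-$n$ factorizations factors through the standard action of the symmetric group $\mathrm{Sym}(n)$ on $n$-tuples by coordinate permutation, with $\sigma_i$ realizing the adjacent transposition $(i,\, i+1)$.

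Finally, since the adjacent transpositions generate $\mathrm{Sym}(n)$, any permutation of the entries of $T$ lies in its Hurwitz orbit. Combining this with the first observation that $V$ is a permutation of $T$ yields $V$ in the Hurwitz orbit of $T$, completing the proof. There is no real obstacle here; the only thing to be careful about is making explicit that ``same multiset of conjugacy classes'' collapses to ``same multiset of reflections'' in the abelian case, which is what makes the argument completely elementary.
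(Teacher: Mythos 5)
Your proposal is correct and follows essentially the same route as the paper's proof: conjugacy classes in an abelian group are singletons, so the multiset hypothesis forces $V$ to be a permutation of $T$, and Hurwitz moves reduce to adjacent swaps, which generate all permutations. The only difference is that you spell out the symmetric-group bookkeeping more explicitly than the paper does.
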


\begin{proof}
Each element of $G$ belongs to its own conjugacy class, so the multiset of conjugacy classes of $T$ and $V$ determines exactly which elements belong to $T$ and $V$. Hurwitz moves on pairs of commuting elements simply swap the two elements within the factorization, so the elements of $T$ may be permuted by Hurwitz moves to match those of $V$.
\end{proof}

If $T$ and $V$ do not generate an abelian or dihedral group, then they generate either a tretrahedral subgroup of $G_7$, or $G(4,2,2)$. We now introduce a key component of our proof strategy for these cases.

\begin{definition}\label{class_searchable}
Let $G$ be a complex reflection group. We say that $G$ is \emph{class-searchable} at length $n$ if the following statement holds: If $T$ is any length-$n$ reflection factorization generating $G$, and if $K$ is a reflection conjugacy class of $G$ which is represented at least twice in $T$, then, for all $x \in K$, there exists a factorization $T'$ Hurwitz equivalent to $T$, such that the last factor of $T'$ is $x$ and the first $k-1$ factors generate $G$.
\end{definition}

The next five subsections prove that $G_4$, $G_5$, $G(4,2,2)$, $G_6$, and $G_7$ are class-searchable at various lengths. Finally, Lemma \ref{main_lemma} in Section \ref{final proof} uses the class-searchable property in an inductive argument. This proof uses low length base cases that may be proved simply by calculation, and the inductive step requires a group to be class-searchable at lengths greater than those calculated. If a group $G$ is class-searchable, two length $n$ factorizations $T$ and $V$ of the same element in $G$ are both equivalent to factorizations with some $x\in K$ in the right-most position. The induction is then invoked by taking two new length $n-1$ factorizations, the same as $T$ and $V$ respectively, except without the element $x$. These new factorizations are equivalent by hypothesis, and then it is easy to show that $T$ and $V$ are equivalent.

\subsection{Factorizations generating $G_4$}
\label{g4 results}

We begin with a basic fact about the Hurwitz orbits of length-2 factorizations that generate $G_4$.
\begin{prop}
\label{l2-orbits}
Suppose the reflection factorization $T = (x,y)$ generates $G_4$. If $x$ and $y$ belong to the same conjugacy class, then the Hurwitz orbit of $T$ introduces one new reflection from the same conjugacy class. If $x$ and $y$ belong to different conjugacy classes, then two new reflections are introduced in the orbit of $T$, one from each conjugacy class. 
\end{prop}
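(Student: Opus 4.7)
The plan is a direct computation of the Hurwitz orbit of $(x,y)$, reduced in size via simultaneous conjugation.

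First, observe that for any $g \in G_4$, simultaneous conjugation $(x,y) \mapsto (gxg^{-1}, gyg^{-1})$ commutes with the Hurwitz move $\sigma_1$, so both the orbit size and the multiset of conjugacy classes of reflections appearing in the orbit depend only on the simultaneous-conjugation class of $(x,y)$. Next, I would enumerate the simultaneous-conjugation orbits of pairs generating $G_4$. In Case 1 (same class, call it $R$), of the $16$ ordered pairs in $R \times R$, the $4$ diagonal pairs generate only $\mathbb{Z}_3$, while the remaining $12$ must all generate $G_4$, since each $\mathbb{Z}_3$-subgroup of $G_4$ contains a unique element of $R$ and $G_4$ has no $\mathbb{Z}_3 \times \mathbb{Z}_3$ subgroup. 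Orbit--stabilizer with stabilizer $Z(G_4)$ shows these $12$ form a single orbit. Case 2 (classes $R$ and $R^{-1}$) is analogous and again yields a single orbit of $12$ generating pairs.

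Having reduced to one representative pair in each case, I would compute the Hurwitz orbit directly. This is simplified by passing to $G_4 / Z(G_4) \cong \operatorname{Alt}(4)$: the reflections of $G_4$ biject with the $3$-cycles of $\operatorname{Alt}(4)$ via the quotient map, and the Hurwitz action descends. For Case 1, taking $\bar x = (1\,2\,3)$ and $\bar y = (1\,4\,2)$, iterating $\sigma_1$ produces the length-$3$ cycle
\[
(\bar x, \bar y) \to (\bar y, (2\,4\,3)) \to ((2\,4\,3), \bar x) \to (\bar x, \bar y),
\]
so only three distinct reflections appear; exactly one is new, in the shared class. For Case 2, taking $\bar x = (1\,2\,3)$ and $\bar y = (1\,2\,4)$, the orbit is a length-$4$ cycle
\[
(\bar x, \bar y) \to (\bar y, (1\,3\,4)) \to ((1\,3\,4), (2\,3\,4)) \to ((2\,3\,4), \bar x) \to (\bar x, \bar y),
\]
introducing two new reflections $(1\,3\,4) \in R$ and $(2\,3\,4) \in R^{-1}$, one from each class.

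The main obstacle is simply the explicit bookkeeping of the orbit in each case, together with checking that the new reflections in Case 2 genuinely land in opposite classes. The reduction via simultaneous conjugation and the passage to $\operatorname{Alt}(4)$ cut the task down to manipulating a handful of $3$-cycles, after which the proposition follows by inspection.
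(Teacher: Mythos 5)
Your proposal is correct, but it takes a genuinely different route from the paper: the paper's entire proof of this proposition is an appeal to computer calculation (Sage, together with the tables in Lazreq et al.), whereas you give a self-contained hand argument. Your two key ideas --- that simultaneous conjugation commutes with the Hurwitz action, so one representative pair per case suffices, and that the quotient map $G_4 \to G_4/Z(G_4) \cong \operatorname{Alt}(4)$ restricts to a Hurwitz-equivariant bijection between the eight reflections and the eight $3$-cycles --- reduce everything to two short cycle computations, and I have checked that your orbits of lengths $3$ and $4$ and the classes of the new entries are as you claim (the particular new $3$-cycles depend on one's composition convention, but the conclusion does not). What your approach buys is a verifiable, conceptual proof in place of a black-box computation; what the paper's approach buys is uniformity with the rest of its methodology, where many base cases are dispatched the same way. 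The only point I would tighten is your justification that the twelve non-degenerate pairs generate $G_4$: the absence of a $\mathbb{Z}_3\times\mathbb{Z}_3$ subgroup only excludes abelian proper subgroups, so you should also note that a proper subgroup containing two distinct Sylow $3$-subgroups would have order $12$, and the binary tetrahedral group $G_4$ has no such subgroup (equivalently, invoke the paper's observation that the only proper reflection subgroups of $G_4$ are copies of $\mathbb{Z}_3$). This is a one-line repair, not a gap in the strategy.
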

\begin{proof}
Lazreq et al.\cite{Lazreq} and our own computations in Sage.
\end{proof}

With this we show that $G_4$ is class-searchable at lengths $5$ or greater.

\begin{prop}
\label{last-place}
Let $T$ be a length $\ell \geq 5$ reflection factorization generating the subgroup $G_4$ of $G_7$. Let $m$ be the number of reflections in $T$ from conjugacy class $R_1$, and $n$ the number from $R_1^{-1}$. If $m = 0$, then, for each $x \in R_1^{-1}$, there is a factorization $V$ in the Hurwitz orbit of $T$ such that the right-most element of $V$ is $x$, and the first $\ell-1$ elements of $V$ generate $G_4$. The same statement is true when $m$ is replaced with $n$ and $R_1^{-1}$ is replaced with $R_1$. If neither $m$ nor $n$ is $0$, then the statement is true when $x$ is any reflection.
\end{prop}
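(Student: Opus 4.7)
The plan is to combine a transport argument, a conjugation step, and an induction on $\ell$. The transport argument brings some representative of the target conjugacy class $K$ (the class containing $x$) to the last position; the conjugation step then twists this representative into $x$ itself; and the induction reduces general $\ell$ to the base case $\ell = 5$, which likely must be verified directly.

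First I would check that in each of the three sub-cases ($m=0$ with $K = R_1^{-1}$; $n=0$ with $K = R_1$; or $m, n > 0$ with $K$ either class), at least one reflection of $K$ appears in $T$. Given $r_i \in K$ at some position $i$, the composition $\sigma_{\ell-1}\cdots\sigma_{i+1}\sigma_i$ transports this reflection rightward, possibly conjugating it within $K$ along the way (since $\sigma_j$ replaces position $j+1$ by a conjugate of the old position $j$). The net effect is that the last factor can be arranged to be some $y \in K$, while the preceding factors are some reshuffled version of the remaining elements of $T$ together with a conjugate inserted, which still generate $G_4$ because the multiset of conjugacy classes is preserved and $r_i$ was only moved, not deleted.

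Next, to upgrade $y$ to the prescribed $x \in K$, I would exploit the fact that $G_4$ acts transitively by conjugation on each reflection class $K$. Writing $x = h^{-1} y h$ for some $h \in G_4$, the move $\sigma_{\ell-1}$ turns the last factor $r_\ell$ into $r_\ell^{-1} r_{\ell-1} r_\ell$, i.e.\ a conjugate of $r_{\ell-1}$; by first reshaping $r_{\ell-1}$ using Hurwitz moves among positions $1, \ldots, \ell-1$, I expect to realize any conjugation of $y$ by a suitably chosen element of $G_4$. For the inductive step from $\ell$ to $\ell+1$, I would ensure (moving elements if necessary) that the last $\ell$ factors of a length-$(\ell+1)$ factorization still generate $G_4$, then apply the induction hypothesis to this tail while fixing the first factor; concatenating yields the desired $V$.

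The main obstacle will be the base case $\ell = 5$. Proposition \ref{l2-orbits} tells us that the Hurwitz orbit of the length-$2$ tail $(r_4, r_5)$ reaches at most two of the four elements per reflection class, so the full class $K$ cannot be swept out by manipulating only $(r_4, r_5)$; the extra flexibility must come from positions $1$, $2$, $3$. Compounding this, one must simultaneously ensure the first four factors of $V$ generate $G_4$, which can be delicate when $T$ is dominated by reflections in a single $\mathbb{Z}_3$-subgroup. I would handle the base case by a finite case analysis on the $\mathbb{Z}_3$-subgroup distribution of $T$, appealing to a Sage-assisted enumeration of length-$5$ Hurwitz orbits in $G_4$ to verify reachability of each $x \in K$; the inductive step is then comparatively routine.
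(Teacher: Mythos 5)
Your overall skeleton---induction on $\ell$ with a computational base case at $\ell = 5$, peeling off the first factor and applying the inductive hypothesis to the tail---is the same as the paper's, and your base-case plan (a Sage enumeration of length-$5$ orbits) matches what the authors actually do. However, two of your three ingredients have genuine gaps. The ``conjugation step'' is both unjustified and unnecessary: transitivity of the conjugation action of $G_4$ on a reflection class $K$ does not mean an arbitrary conjugate of the last factor is reachable by Hurwitz moves, since a Hurwitz move only conjugates by a neighboring \emph{factor}, and Proposition \ref{l2-orbits} shows that the orbit of a length-$2$ factorization introduces at most two new reflections, so the locally available conjugations are very restricted. Your proposed fix (``reshape $r_{\ell-1}$ so as to realize any conjugation by a suitably chosen element of $G_4$'') is exactly the kind of reachability claim that would itself need the proposition being proved. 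The paper never performs such a step: the ``for all $x \in K$'' conclusion is carried entirely by the inductive hypothesis, bottoming out in the $\ell = 5$ computation.

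Second, your inductive step hides the real difficulty in the parenthetical ``moving elements if necessary.'' It is not always possible, by transport alone, to arrange that the last $\ell$ factors of a length-$(\ell+1)$ factorization generate $G_4$: if $T = (r, x, \ldots, x)$ with $r \neq x$ both in $R_1^{-1}$, every such rearrangement leaves a tail generating only $\langle x\rangle \cong \mathbb{Z}_3$. The paper's case analysis ($m=0$; $m=1$ or $n=1$; $m,n\geq 2$) exists precisely to handle these degenerate configurations, and the fix is a Hurwitz move that genuinely changes an element---producing $x^{-1} r x \neq x$ inside the tail---together with an appeal to Proposition \ref{l2-orbits} in the case of a single reflection from one class. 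You also omit a needed side condition: the tail must contain at least one element of \emph{each} conjugacy class represented in $T$, since otherwise the inductive hypothesis yields $x$ ranging over only one class rather than over all reflections in the case $m, n \neq 0$. With these repairs your outline becomes the paper's proof; as written, the inductive step does not go through.
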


\begin{proof}
The proof is by induction on $\ell$. The base case $\ell = 5$ was verified by computation. Assume the statement holds when $\ell = k - 1$, with $k \geq 6$, and let $T$ be a length-$k$ reflection factorization. To show that the proposition holds for $T$, it is enough to produce a factorization $T'$ Hurwitz equivalent to $T$, such that the last $k - 1$ elements of $T'$ generate $G_4$ and contain at least one element of each conjugacy class represented in $T$. The proposition then follows by applying the inductive hypothesis to the length-$(k-1)$ factorization obtained by ignoring the first element of $T'$.

If $m = 0$ then $n = k$ and $T = (r_1, \ldots, r_n)$, with $r_i \in R_1^{-1}$ for all $i$. If $r_2, \ldots, r_n$ generate $G_4$, then the proposition follows from the inductive hypothesis (let $T' = T$ in the previous paragraph). Otherwise $r_2, \ldots, r_n = x$ for some $x \in R_1^{-1}$, because any two distinct reflections from the same conjugacy class generate $G_4$. Thus $r_1 \in R_1^{-1} \setminus \{x\}$. One Hurwitz move sends $T$ to the factorization $T' = (x, x^{-1} \, r_1 \, x, x, \ldots, x)$. But $x^{-1} \, r_1 \, x \neq x$, so the last $k-1$ factors of $T'$ generate $G_4$, and we are done. The argument so far works if we exchange $n$ and $m$ and exchange $R_1^{-1}$ and $R_1$.

Now assume $m,n \neq 0$, and either $m$ or $n$ is $1$. If $m = 1$, then we can shift the single reflection from $R_1$ to position 2 by a series of Hurwitz moves on $T$, so that $T$ is Hurwitz equivalent to a factorization $(r_1, x, r_2, \ldots, r_n)$ with $x \in R_1$ and $r_i \in R_1^{-1}$ for all $i$. If $x, r_2, \ldots, r_n$ generate $G_4$, then the proposition follows from the inductive hypothesis. Otherwise $r_2, \ldots, r_n$ are all equal to $x^{-1}$, because any pair of noninverse reflections from different conjugacy classes generate $G_4$. Since $T$ generates $G_4$, it must be the case that $r_1 \neq x^{-1}$. By Proposition \ref{l2-orbits}, there exist $x' \in R_1$ and $y' \in R_1^{-1}$ with $x' \neq x$ such that 
\[(r_1, x, x^{-1}, \ldots, x^{-1}) \sim (y', x', x^{-1}, \ldots, x^{-1}).\]
Since $x', x^{-1}$ generate $G_4$, the proposition follows again from the inductive hypothesis.

Finally, assume $m,n \geq 2$. Using Hurwitz moves we can group the reflections in $T$ by conjugacy class, so that 
\[ T \sim (r_1, \ldots, r_m, s_1, \ldots, s_n), \]
with $r_i \in R_1$ and $s_i \in R_1^{-1}$.
If $r_2, \ldots, r_m, s_1, \ldots, s_n$ generate G4 then the statement follows by the inductive hypothesis. Otherwise, $r_2, \ldots, r_m = x$, $s_1, \ldots, s_n = x^{-1}$ for some $x \in R_1$, and $r_1 \neq x$. With one Hurwitz move we have 
\[T \sim (r_1, x, \ldots, x, x^{-1}, \ldots, x^{-1}) \sim (x, x^{-1} \, r_1 \, x, x, \ldots, x, x^{-1}, \ldots, x^{-1}).\]
Since $x^{-1} \, r_1 \, x \neq x$, the last $k-1$ reflections generate $G_4$, and we may invoke the inductive hypothesis.
\end{proof}

\subsection{Factorizations generating $G_5$}
\label{g5 results}

The exceptional complex reflection group $G_5$ is treated next. The following proposition shows that $G_5$ is class-searchable starting at length five.

\begin{prop} \label{prop:g5-last-place}
Let $T$ be a reflection factorization of length $\ell \geq 5$ generating $G_5$. If at least two reflections from some reflection conjugacy class $K \in \{R_1, R_1^{-1}, R_2, R_2^{-1}\}$ of $G_5$ appear in $T$, then for all $x \in K$, there exists a factorization $T'$, Hurwitz equivalent to $T$, of the form $(r_1, \ldots, r_{\ell-1}, x)$, such that $r_1, \ldots, r_{\ell-1}$ generate $G_5$.
\end{prop}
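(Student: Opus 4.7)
My plan is to induct on $\ell$, with the base case $\ell = 5$ verified by direct computation (as for the analogous $G_4$ statement in Proposition \ref{last-place}). For the inductive step, fix $k \geq 6$ and assume the proposition holds at length $k - 1$. Let $T$ be a length-$k$ factorization generating $G_5$ and containing at least two reflections from $K$; by the symmetry of the four reflection classes, I may take $K = R_1$. Let $(m_1, m_2, n_1, n_2)$ denote the multiplicities of $R_1, R_1^{-1}, R_2, R_2^{-1}$ in $T$. Then $m_1 \geq 2$, and moreover $n_1 + n_2 \geq 1$: otherwise $T$ would be contained in the $G_4$ subgroup on $R_1 \cup R_1^{-1}$ and could not generate $G_5$.

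The target is to find $T' \sim T$ whose positions $2$ through $k$ form a length-$(k-1)$ factorization generating $G_5$ and still containing at least two reflections from $R_1$; the inductive hypothesis applied to those positions, with class $K = R_1$, can then place any specified $x \in R_1$ at the final slot while keeping the preceding $k - 2$ positions generating $G_5$. If $m_1 \geq 3$, my approach is to Hurwitz-sort $T$ by conjugacy class and move an $R_1$ reflection to position 1, so that positions $2$ through $k$ retain $m_1 - 1 \geq 2$ reflections from $R_1$ together with all the $R_2 \cup R_2^{-1}$ reflections. If instead $m_1 = 2$, I would place a non-$R_1$ reflection at position 1 (from $R_1^{-1}$ when $m_2 \geq 1$, otherwise from $R_2 \cup R_2^{-1}$), preserving both $R_1$ reflections in positions $2$ through $k$.

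The main obstacle is the bad case in which positions $2$ through $k$ generate a proper reflection subgroup of $G_5$. Inspecting Figure \ref{classes picture}, any proper reflection subgroup of $G_5$ meeting both $R_1$ and $R_2 \cup R_2^{-1}$ is a copy of $\mathbb{Z}_3 \times \mathbb{Z}_3 = \langle y, z \rangle$, inside which all $R_1$-reflections equal one fixed $y$, all $R_1^{-1}$-reflections equal $y^{-1}$, and all $R_2 \cup R_2^{-1}$-reflections form the commuting pair $\{z, z^{-1}\}$. Since $T$ generates $G_5$, the reflection at position 1 must lie outside $\langle y, z \rangle$ and in particular does not commute with $y$. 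This noncommutation is the key to the fix: one application of $\sigma_1$, as in Proposition \ref{last-place}, conjugates the position-1 reflection by $y$, producing a genuinely new reflection that provides a second distinct $R_1$-element in positions $2$ through $k$ (when $m_1 \geq 3$) or an element outside $\langle y, z \rangle$ (in the $m_1 = 2$ subcases). In the latter case a second application of $\sigma_1$ is needed, because the first $\sigma_1$ pulls an $R_1$-reflection to position 1 and drops the $R_1$-count in positions $2$ through $k$ to one; the second $\sigma_1$ restores two distinct $R_1$-reflections while preserving the generation of $G_5$. I expect the most delicate step to be verifying in each subcase that the relevant conjugates are distinct from $y$, which ultimately rests on the fact that in $G_5$ two distinct reflections from $R_1$ do not commute.
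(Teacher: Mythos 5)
Your plan is correct, and it reaches the same conclusion by a noticeably different decomposition than the paper. The paper's proof always moves a reflection $a$ from a class outside $\{K, K^{-1}\}$ to position $1$ and two $K$-representatives to positions $2$ and $3$; as a consequence, positions $2$ through $k$ may generate only a copy of $G_4$, and the paper handles that possibility by falling back on the $G_4$ class-searchability result (Proposition \ref{last-place}). Your multiplicity-dependent choice of what to strip into position $1$ guarantees that positions $2$ through $k$ always meet both $R_1$ and $R_2 \cup R_2^{-1}$, so by the subgroup lattice the only degenerate case is $\mathbb{Z}_3 \times \mathbb{Z}_3$, and you never need Proposition \ref{last-place} at all --- only the inductive hypothesis. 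The second difference is in how the degenerate case is repaired: the paper cites a Sage computation of all length-$2$ Hurwitz orbits to assert that $\sigma_1^2$ produces a new element of $K$ in position $2$, whereas you justify your one or two applications of $\sigma_1$ structurally, via the fact that the only reflections commuting with $y$ are $y^{\pm 1}, z^{\pm 1}$ (equivalently, that two distinct same-class reflections generate a copy of $G_4$), so a reflection outside $\langle y, z\rangle$ conjugates $y$ to something new. Your route costs more case analysis ($m_1 \geq 3$ versus $m_1 = 2$ with sub-splits, and a word is needed to justify the reduction to $K = R_1$, though the argument is genuinely class-agnostic) but buys independence from both the $G_4$ proposition and the length-$2$ orbit computation; the paper's route is shorter and uniform in $K$ precisely because it is willing to lean on those two external inputs. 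The delicate verifications you flag --- that the relevant conjugates differ from $y$, and that at least two $K$-reflections and one $R''$-reflection survive in positions $2$ through $k$ after your moves --- all check out.
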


\begin{proof}
The proof is by induction on $\ell$, with base case $\ell=5$ done by computation. Assume the statement is true when $\ell = k-1 \geq 5$. Let $T$ be a reflection factorization of length $k$ that generates $G_5$, with conjugacy class $K$ represented at least twice in $T$. Since $T$ generates $G_5$, $T$ must contain some reflection $a$ belonging to one of $\{R_1,R_1^{-1},R_2,R_2^{-1}\} \setminus \{K, K^{-1}\}$ (otherwise $T$ would generate at most a copy of $G_4$). Move $a$ by Hurwitz moves to the first coordinate of $T$, and move two representatives of $K$ to the second and third coordinates, producing a factorization $T_1 = (a, h_1, \ldots, h_{k-1})$ with $h_1,h_2 \in K$. Let $T_1' = (h_1, \ldots, h_{k-1})$. If $T_1'$ generates $G_5$, then the proposition follows by the inductive hypothesis. Similarly if the reflections in $T_1'$ generate an isomorphic copy of $G_4$, then the proposition follows by Proposition ~\ref{last-place}.

If the subgroup of $G_5$ generated by $T_1'$ is not isomorphic to $G_4$ or $G_5$, then it is abelian, and $h_1, \ldots, h_{k-1}$ commute. But $T_1$ generates $G_5$, so $a$ and $h_1$ must not commute (noninverse commuting reflections generate $\mathbb{Z}_2 \times \mathbb{Z}_2$). By computing all Hurwitz orbits of length-2 reflection factorizations in $G_5$, we have determined that two Hurwitz moves on $(a, h_1)$ give
\[
T \sim (a, h_1, \ldots, h_{k-1}) \overset{\sigma_1^2}{\sim} (a', h_1', h_2, \ldots, h_{k-1}),
\]
where $a$ and $a'$ belong to the same conjugacy class, $h_1' \in K$, and $h_1' \neq h_1$. In particular, $h_1'$ does not commute with $h_2, \ldots, h_{k-1}$, so the subgroup $G$ of $G_5$ generated by $h_1', h_2, \ldots, h_{k-1}$ is not abelian. Thus $G$ is either $G_5$ or isomorphic to $G_4$. If $G = G_5$, the proposition follows by the inductive hypothesis; if $G \cong G_4$, use Proposition \ref{last-place}.
\end{proof}

\subsection{Factorizations generating $G(4,2,2)$}
\label{g422 results}

Proposition \ref{G422_generate} outlines the groups that are isomorphic to the reflection subgroups of $G(4, 2, 2)$ and how they are generated.

\begin{prop}
\label{G422_generate}
Taking reflections from all three conjugacy classes $S_1$, $S_2$, and $S_3$ will generate the group $G(4, 2, 2)$.
\end{prop}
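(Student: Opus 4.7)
The plan is to prove this by the structure of reflection subgroups of $G(4,2,2)$ already listed in Figure \ref{classes picture}. Since any subgroup generated by reflections in $G(4,2,2)$ is itself a reflection subgroup, it suffices to check that no \emph{proper} reflection subgroup contains reflections from all three of $S_1$, $S_2$, $S_3$.

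First I would enumerate the proper reflection subgroups of $G(4,2,2)$ as recorded in Figure \ref{classes picture}: the three copies of $D_{2\times 4}$ labeled $(S_1,S_2)$, $(S_1,S_3)$, and $(S_2,S_3)$; three copies of $\mathbb{Z}_2 \times \mathbb{Z}_2$; and the cyclic subgroups of order $2$. Next I would verify, via the parenthetical labels on the three dihedral nodes, that each $D_{2\times 4}$ reflection subgroup contains reflections from exactly two of the three classes; in particular, no proper dihedral reflection subgroup of $G(4,2,2)$ meets all three of $S_1,S_2,S_3$. For the remaining (abelian) reflection subgroups, I would note that a $\mathbb{Z}_2 \times \mathbb{Z}_2$ reflection subgroup is generated by two commuting reflections and a $\mathbb{Z}_2$ by a single reflection, so each contains reflections from at most two classes (in fact at most two elements, hence at most two classes).

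Combining these observations: if a set of reflections $\{x,y,z\}$ with $x \in S_1$, $y \in S_2$, $z \in S_3$ generates a reflection subgroup $H \leq G(4,2,2)$, then $H$ contains reflections from each of $S_1, S_2, S_3$, and therefore $H$ cannot be any of the listed proper reflection subgroups. Hence $H = G(4,2,2)$, proving the proposition.

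The only potentially delicate step is confirming the class labels on the three copies of $D_{2\times 4}$ in Figure \ref{classes picture}, but this is part of the structural description of $G(4,2,2)$ given in Section \ref{g7_bg} and can be cross-checked by direct computation with the explicit monomial matrices, since each $D_{2\times 4}$ reflection subgroup is determined by its $\mathbb{Z}_2 \times \mathbb{Z}_2$ subgroups and each of these is the $\mathbb{Z}_2 \times \mathbb{Z}_2$ generated by the two commuting reflections lying in a single class $S_i$. Once that labeling is in hand, the argument above is immediate.
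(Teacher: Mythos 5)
Your argument is correct, but it takes a genuinely different route from the paper. The paper's proof is a pure brute-force verification: all combinations consisting of one reflection from each of $S_1$, $S_2$, $S_3$ were checked in Sage to generate the whole group. You instead argue structurally: the subgroup generated by such a set is a reflection subgroup of $G(4,2,2)$, and every proper reflection subgroup in the lattice of Figure \ref{classes picture} (the three $D_{2\times 4}$'s, the $\mathbb{Z}_2\times\mathbb{Z}_2$'s, and the $\mathbb{Z}_2$'s) contains reflections from at most two of the three classes, so a subgroup meeting all three classes must be all of $G(4,2,2)$. This is logically airtight and more illuminating than the paper's computation --- it explains \emph{why} the statement holds, and the supporting fact that commuting distinct reflections lie in a common class (so each $\mathbb{Z}_2\times\mathbb{Z}_2$ sees only one class) is exactly Proposition \ref{G422_conjugation}. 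The trade-off is that your proof leans on the completeness of the reflection-subgroup enumeration in Figure \ref{classes picture} and the class labels on its nodes, which the paper asserts in Section \ref{g7_bg} without proof (and which would themselves be verified by essentially the same kind of machine computation the paper uses here). So you have not eliminated the computational input, only relocated it to the subgroup classification; given that the paper already takes that classification as established background, your argument is a legitimate and arguably cleaner alternative.
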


\begin{proof}
The 27 combinations with at least one reflection from each class were proven by calculation through Sage to generate the entire group.
\end{proof}

The following two propositions give the necessary conditions for the right-most element of a factorization in $G(4, 2, 2)$ to be any desired element $t$.

\begin{prop}
\label{G422_conjugation}
Let $s$ and $r$ be two reflections in $G(4, 2, 2)$. If $s$ and $r$ are in the same conjugacy class, conjugating $s$ by $r$ will result in $s$. If they are not in the same class, conjugating $s$ by $r$ will result in the other reflection in the conjugacy class of $s$.
\end{prop}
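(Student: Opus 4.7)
The plan is to reduce the statement to a claim about commutativity and then verify that claim by a direct enumeration of the six reflections of $G(4,2,2)$.

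Every reflection $r \in G(4,2,2)$ has order two, so $r^{-1} = r$ and the conjugate $r^{-1} s r = r s r$ is itself a reflection, lying in the same reflection conjugacy class as $s$. Since the six reflections of $G(4,2,2)$ are split evenly into the three classes $S_1$, $S_2$, $S_3$, each class has exactly two elements, and so $rsr$ must equal either $s$ or the unique other reflection $s'$ in the class of $s$. Moreover, $rsr = s$ if and only if $rs = sr$, i.e., if and only if $r$ and $s$ commute. Thus the proposition reduces to showing that two reflections in $G(4,2,2)$ commute if and only if they lie in the same reflection conjugacy class.

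To verify this, I would write out the six reflections explicitly as monomial $2\times 2$ matrices. The two diagonal reflections are $\operatorname{diag}(-1,1)$ and $\operatorname{diag}(1,-1)$, and the four antidiagonal reflections have the form $\bigl(\begin{smallmatrix} 0 & a \\ a^{-1} & 0 \end{smallmatrix}\bigr)$ for $a \in \{\pm 1, \pm i\}$. A quick calculation shows that two antidiagonal reflections with upper-right entries $a$ and $b$ commute precisely when $a^2 = b^2$, and that no antidiagonal reflection commutes with either diagonal reflection. This yields exactly three commuting pairs---the two diagonals, the antidiagonals with $a \in \{\pm 1\}$, and the antidiagonals with $a \in \{\pm i\}$---which must therefore be the three conjugacy classes $S_1$, $S_2$, $S_3$, and shows that reflections in different classes never commute. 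Combined with the reduction in the previous paragraph, this proves the proposition.

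The main obstacle is minor and essentially bookkeeping: one has to identify how the six reflections partition into the three conjugacy classes. Once the reflections are listed explicitly, the required commutator checks are elementary $2\times 2$ matrix products, and there are only a handful of distinct cases (same-class and cross-class) to verify.
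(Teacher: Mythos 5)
Your reduction is sound and is in essence the same as the paper's: the same-class case is handled by commutativity, and in the cross-class case the conjugate $rsr$ must be the other element of the two-element class of $s$ once one knows $rsr \neq s$. The explicit enumeration of the six reflections and the commutator computations (two antidiagonal reflections commute iff $a^2 = b^2$; no antidiagonal commutes with a diagonal) are correct and serve as a by-hand replacement for the paper's Sage verification.

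The one genuine gap is the sentence asserting that the three commuting pairs ``must therefore be the three conjugacy classes.'' That does not follow from the commutator computation alone: a priori the three size-two conjugacy classes could cross-cut the commuting pairs (e.g.\ classes $\{x,y\},\{x',y'\},\{z,z'\}$ where the commuting pairs are $\{x,x'\},\{y,y'\},\{z,z'\}$ is not excluded by conjugation-invariance of the commuting relation). You need one further computation to show each commuting pair is a single conjugacy class: for instance, any antidiagonal reflection conjugates $\mathrm{diag}(1,-1)$ to $\mathrm{diag}(-1,1)$, and $\mathrm{diag}(1,-1)$ conjugates the antidiagonal reflection with upper-right entry $a$ to the one with entry $-a$; since conjugation preserves both diagonality and the value $a^2$, each pair is exactly one class. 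With that line added, your argument closes completely and needs nothing further from the paper.
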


\begin{proof}
When $s$ and $r$ are in the same class, they commute with each other, and so $$r^{-1}sr = r^{-1}rs = s.$$
When $s$ and $r$ are not in the same class, the result is proven by direct calculation in Sage.
\end{proof}

\begin{prop}
\label{G422_last_place_weak}
Take a factorization $T$ with length $\ell\geq3$. For any reflection $t$ in $G(4, 2, 2)$, if $T$ contains at least one reflection from each conjugacy class, then there is a factorization in the Hurwitz orbit of $T$ whose right-most element is $t$.
\end{prop}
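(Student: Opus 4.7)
The plan is to exploit two facts: each Hurwitz move transposes adjacent entries of the \emph{sequence of conjugacy classes} of a factorization, and (by Proposition~\ref{G422_conjugation}) conjugating a reflection in $G(4,2,2)$ by a reflection in a different conjugacy class swaps the two reflections of its class. I will first use Hurwitz moves to rearrange $T$ so that its final two positions contain a reflection in the class of $t$ followed by a reflection in some other class, and then I will finish with a single Hurwitz move that either leaves $t$ in the last position or converts $t'$ into $t$.

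For the rearrangement step, I first observe that if $\sigma_i$ sends $(r_i, r_{i+1})$ to $(r_{i+1}, r_{i+1}^{-1} r_i r_{i+1})$, then the conjugacy class at position $i$ becomes that of $r_{i+1}$ and the conjugacy class at position $i+1$ becomes that of $r_i$; hence $\sigma_i$ acts on the sequence of conjugacy classes as the adjacent transposition $(i, i+1)$. Since adjacent transpositions generate the full symmetric group, every permutation of the class multiset of $T$ is realizable by some sequence of Hurwitz moves. Writing $t \in S_k$ and picking any class $S_j$ with $j \neq k$ (guaranteed to appear in $T$ by hypothesis), I therefore produce a Hurwitz-equivalent factorization $T_1 = (r_1, \ldots, r_\ell)$ with $r_{\ell-1} \in S_k$ and $r_\ell \in S_j$.

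The element $r_{\ell-1}$ is either $t$ itself or the other reflection $t'$ in $S_k$. If $r_{\ell-1} = t$, I apply $\sigma_{\ell-1}^{-1}$, which sends $(r_{\ell-1}, r_\ell)$ to $(r_{\ell-1} r_\ell r_{\ell-1}^{-1}, r_{\ell-1})$ and places $t$ at position $\ell$. If instead $r_{\ell-1} = t'$, I apply $\sigma_{\ell-1}$, so that position $\ell$ becomes $r_\ell^{-1} \, t' \, r_\ell$; because $r_\ell \in S_j \neq S_k$, Proposition~\ref{G422_conjugation} identifies this conjugate as the other element of $S_k$, namely $t$. The only real obstacle is the class-permutation observation in the previous paragraph, and it is mild: it follows immediately from the definition of $\sigma_i$ together with the invariance of conjugacy class under conjugation. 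The remainder of the proof is a two-case analysis using only the formulas for $\sigma_{\ell-1}$ and its inverse together with Proposition~\ref{G422_conjugation}.
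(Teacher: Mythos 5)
Your proof is correct and follows essentially the same strategy as the paper's: use Hurwitz moves to rearrange the sequence of conjugacy classes so that a representative of the class of $t$ sits adjacent to a reflection from a different class, then invoke Proposition~\ref{G422_conjugation} to convert that representative into $t$ when it is not $t$ already. The only cosmetic differences are that the paper groups the class of $t$ on the right and works at the boundary with two forward Hurwitz moves, whereas you arrange the last two positions directly and handle the case $r_{\ell-1}=t$ with an inverse Hurwitz move.
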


\begin{proof}
By using Hurwitz moves, group all reflections in the same conjugacy class as $t$ on the right of the factorization. This factorization will be called $T'$, and is naturally in the Hurwitz orbit of $T$.

If $t$ is present in $T'$, it can easily be moved to the right-most position, since elements in the same class commute with each other.

Otherwise, take $r$ to be the left-most reflection in the conjugacy class of $t$, where $r$ is the $k$-th element of the factorization, and so (by construction) the $k-1$ element is not in the same class as $r$.
Use a Hurwitz move to swap the elements in the $k-1$ and $k$ position, so that $r$ is now in the $k-1$ position.
Then, use another Hurwitz move to swap the elements in the $k-1$ and $k$ position, and so the $k$-th element is now $t$ by Proposition \ref{G422_conjugation}.
Then, $t$ can be moved to the right-most position as all elements to the right of it are in the same class.
\end{proof}

With the results from Propositions \ref{G422_generate} and \ref{G422_last_place_weak}, we may give the conditions under which $G(4, 2, 2)$ is class-searchable.

\begin{prop}
\label{G422_strong_last_place}
Take a factorization $T$ with length $\ell\geq 4$, with the factorization consisting of at least one element from each conjugacy class. Let $r$ be a reflection in one of the conjugacy classes that appears multiple times. If $t$ is a reflection in the same conjugacy class as $r$, then there is a factorization whose right-most element is $t$ and whose first $\ell-1$ elements from the left generate the group $G(4, 2, 2)$ that is in the Hurwitz orbit of $T$.
\end{prop}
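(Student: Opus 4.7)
The plan is to layer the new statement on top of Proposition \ref{G422_last_place_weak}, using the extra hypothesis that the class $K$ containing $r$ and $t$ has multiplicity at least two in $T$. First I would apply Proposition \ref{G422_last_place_weak} directly to $T$ and to the chosen target $t$; since $T$ already contains a reflection from each of $S_1, S_2, S_3$, the weak version produces a Hurwitz-equivalent factorization $T' = (r_1, \ldots, r_{\ell-1}, t)$. The only thing Proposition \ref{G422_last_place_weak} does not provide is the generation condition on the prefix $(r_1, \ldots, r_{\ell-1})$, and that is exactly what I would establish next.

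To control the prefix, I would invoke the fact, noted in Section \ref{Hurwitz Action}, that Hurwitz moves preserve the multiset of conjugacy classes. Hence $T'$ contains the same number of reflections from each class as $T$ does. In particular, each of the two classes distinct from $K$ still appears at least once in $T'$, and since position $\ell$ is occupied by $t \in K$, none of those occurrences is at position $\ell$; they must therefore lie in positions $1, \ldots, \ell-1$. The class $K$ itself appears with multiplicity at least two in $T$, hence at least two in $T'$, so even after setting aside the copy of $t$ at position $\ell$, at least one reflection of $K$ remains in the prefix. Thus $(r_1, \ldots, r_{\ell-1})$ contains at least one reflection from each of $S_1, S_2, S_3$, and Proposition \ref{G422_generate} lets us conclude that it generates $G(4,2,2)$.

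There is no real obstacle here: once Proposition \ref{G422_last_place_weak} is in hand, the upgrade to class-searchability of $G(4,2,2)$ reduces to a short bookkeeping argument about class multiplicities. The one place where care is needed is in using the hypothesis that $K$ is represented \emph{at least twice}; this is precisely the additional assumption that distinguishes Proposition \ref{G422_strong_last_place} from Proposition \ref{G422_last_place_weak}, and it is exactly what guarantees that removing one $K$-representative at position $\ell$ does not strip the prefix of the $K$-class.
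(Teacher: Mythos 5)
Your argument is correct and follows essentially the same route as the paper: apply Proposition \ref{G422_last_place_weak} to place $t$ in the right-most position, then use the preservation of the multiset of conjugacy classes under Hurwitz moves together with the multiplicity-two hypothesis on $K$ to see that the prefix still meets all three classes, so Proposition \ref{G422_generate} gives the generation claim. Your write-up is in fact slightly more careful than the paper's in spelling out the bookkeeping.
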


\begin{proof}
Use Hurwitz moves to group the reflections in $T$ by conjugacy class, with $t$ as the right-most element, which can be moved there by Proposition \ref{G422_last_place_weak}. Since $t$ is in a conjugacy class with multiple representatives in the factorization, there is still at least one reflection from each conjugacy class in the first $\ell-1$ elements of $T$. Therefore, the factorization will generate $G(4, 2, 2)$, and so the statement holds.
\end{proof}

\bigskip

\subsection{Factorizations generating $G_6$}
\label{g6 results}
The following observation denotes new notation about the conjugacy classes in $G_6$.

\begin{observation}
$G_6$'s 14 reflections are split into 3 conjugacy classes, $R_1$, $R_1^{-1}$, and $S$, where $R_1^{-1}$ contains the inverse of each element in $R_1$. Let $R' = R_1\cup R_1^{-1}$.
\end{observation}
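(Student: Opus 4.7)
The observation largely collects facts already recorded in Section \ref{g7_bg} together with a piece of notation, so the ``proof'' is a short verification rather than a genuine argument. The claim that $G_6$ contains $14$ reflections partitioned into $8$ of order $3$ and $6$ of order $2$, with the order-$2$ reflections forming a single conjugacy class $S$ and the order-$3$ reflections forming two classes of size $4$, is already stated in Section \ref{g7_bg} and can be confirmed by direct computation in Sage using the generators $\{s,t\}$ (or by appealing to the Shephard--Todd classification).

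The only genuinely new content is the assertion that the two order-$3$ conjugacy classes are interchanged by inversion, which justifies the label $R_1^{-1}$ for the second class. My plan is to argue this by a general fact plus one explicit check. For any group $G$, the map $g \mapsto g^{-1}$ commutes with conjugation, so it induces a permutation of the set of conjugacy classes; moreover, any order-$3$ element $r$ satisfies $r^{-1} = r^2 \neq r$, so inversion acts without fixed elements on the order-$3$ reflections. It follows that either each order-$3$ class is closed under inversion (its four elements pairing up into two inverse pairs), or the two order-$3$ classes are swapped bijectively. To distinguish these cases I would fix a representative $r \in R_1$ coming from the matrix generator $t$ of Section \ref{g7_bg}, compute $r^{-1} = r^2$ as a matrix, and verify by a finite conjugacy check in $G_6$ that $r^{-1}$ is \emph{not} conjugate to $r$. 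One such check suffices, since the induced inversion on the two-element set of order-$3$ classes is then forced to be the nontrivial permutation.

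The only (mild) obstacle is performing this explicit conjugacy check, which is a routine finite calculation; after that, the assertion that $R_1^{-1}$, defined as the set of inverses of $R_1$, coincides with the second order-$3$ conjugacy class of $G_6$ is immediate. The final sentence of the observation, introducing $R' = R_1 \cup R_1^{-1}$, is purely a definition and requires no proof.
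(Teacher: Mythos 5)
Your proposal is correct. Note, though, that the paper offers no proof of this statement at all: it is tagged as an \emph{Observation} precisely because it only fixes the notation $R' = R_1 \cup R_1^{-1}$ and restates facts about $G_6$ already recorded in Section \ref{g7_bg} (48 elements, 8 order-3 reflections in two classes, 6 order-2 reflections in one class), which the authors establish by computation in Sage. Your argument for the one genuinely checkable claim --- that the two order-3 classes are exchanged by inversion --- is sound and slightly more structural than the paper's implicit brute-force verification: since $g \mapsto g^{-1}$ commutes with conjugation and preserves element orders, it permutes the two order-3 reflection classes, so a single conjugacy check on one representative $r \in R_1$ (is $r^2$ conjugate to $r$ in $G_6$?) decides whether the classes are each closed under inversion or swapped. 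This buys you a reduction from checking all eight order-3 reflections to checking one, at the cost of a short general argument; either way the final step is a finite computation, so the two approaches are essentially equivalent in content.
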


A similar strategy as that used for the proof of the group $G(4, 2, 2)$ is used to prove that $G_6$ is class-searchable.

\begin{prop}
\label{G6_generate}
Any combination of one or more reflections from $R'$ and one or more from $S$ will generate the entire group $G_6$.
\end{prop}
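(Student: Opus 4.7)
The plan is to argue that any reflection subgroup of $G_6$ containing a reflection from $R'$ and a reflection from $S$ must be all of $G_6$, by ruling out every proper reflection subgroup. The subgroup generated by the given reflections is, by definition, a reflection subgroup of $G_6$, so it suffices to inspect the list of reflection subgroups of $G_6$ (read off from Figure \ref{classes picture}) and check, for each, which conjugacy classes of $G_6$ its reflections belong to.

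First I would enumerate the proper reflection subgroups of $G_6$ (with reflection conjugacy classes $S$, $R_1$, $R_1^{-1}$) using Figure \ref{classes picture}. These are: copies of $\mathbb{Z}_2$ (generated by one reflection in $S$), copies of $\mathbb{Z}_3$ (generated by one reflection in $R_1 \cup R_1^{-1}$), copies of $\mathbb{Z}_2 \times \mathbb{Z}_2$, copies of $D_{2\times 4}$, and the single copy of $G(4,2,2)$, all contained in the $G(4,2,2)$-branch of the lattice, together with the single copy of $G_4$ with reflection classes $R_1, R_1^{-1}$. The key observation is that in the chain $\mathbb{Z}_2 \subset \mathbb{Z}_2 \times \mathbb{Z}_2 \subset D_{2 \times 4} \subset G(4,2,2)$ every reflection lies in the class $S$, while in the chain $\mathbb{Z}_3 \subset G_4$ every reflection lies in $R' = R_1 \cup R_1^{-1}$. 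Thus no proper reflection subgroup of $G_6$ simultaneously contains a reflection from $R'$ and a reflection from $S$.

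Now let $H$ be the reflection subgroup of $G_6$ generated by the given collection of reflections. By hypothesis $H$ contains a reflection in $R'$ and a reflection in $S$. By the preceding paragraph, $H$ cannot be any of the listed proper reflection subgroups of $G_6$, so $H = G_6$, as required.

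The main potential obstacle is making sure the enumeration of reflection subgroups of $G_6$ is complete and that each subgroup's reflections are assigned to the correct conjugacy class of $G_6$; this is a finite verification that follows directly from the description of the subgroup lattice given in Figure \ref{classes picture} and the discussion following it (where it is stated that $G(4,2,2)$ has only reflections of order $2$, hence only reflections in $S$, and $G_4$ has only reflections of order $3$, hence only reflections in $R'$). Once this bookkeeping is in place, the argument is immediate and no computation in $G_7$ itself is needed.
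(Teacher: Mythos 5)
Your proof is correct, but it takes a genuinely different route from the paper: the paper's proof is a brute-force verification in Sage that each of the $48$ pairs consisting of one reflection from $R'$ and one from $S$ generates $G_6$, whereas you give a conceptual argument from the reflection subgroup lattice. Your key observation --- that every proper reflection subgroup of $G_6$ has all of its reflections of a single order, hence lies entirely in $S$ (the $\mathbb{Z}_2 \subset \mathbb{Z}_2\times\mathbb{Z}_2 \subset D_{2\times 4} \subset G(4,2,2)$ chain) or entirely in $R'$ (the $\mathbb{Z}_3 \subset G_4$ chain) --- immediately rules out every proper candidate for the subgroup generated, and it also explains \emph{why} the statement is true rather than merely certifying it. The trade-off is that your argument leans on the completeness of the enumeration of reflection subgroups in Figure \ref{classes picture}, which the paper asserts as background but does not prove in the text (it is presumably itself verified computationally); the paper's direct computation avoids that dependence at the cost of being unilluminating. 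Both are valid; yours would generalize more gracefully (for instance, the analogous statement for $G_7$, Proposition \ref{G7_generate}, would follow by the same lattice inspection), while the paper's is self-contained given access to the software.
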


\begin{proof}
Proved by calculating the 48 possible combinations of one reflection from $R'$ and one from $S$ through Sage.
\end{proof}

\begin{prop}
\label{G6_last_place}
Take a factorization $T$ with length $\ell \geq 3$. For any $t$ in the multiset of conjugacy classes of $T$, there is a factorization in the Hurwitz orbit of $T$ whose right-most element is $t$, if at least one of the following is true:
\begin{enumerate}
\item $T$ contains at least two elements from $S$ and at least one element from $R'$.
\item $T$ contains at least two elements from $R_1$ and at least one element from $S$.
\item $T$ contains at least two elements from $R_1^{-1}$ and at least one element from $S$.
\end{enumerate}
\end{prop}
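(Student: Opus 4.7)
The plan is to adapt the strategy of Proposition \ref{G422_last_place_weak} to $G_6$, accounting for the fact that the reflection conjugacy classes here are larger: $|S| = 6$ and $|R_1| = |R_1^{-1}| = 4$, so a single conjugation no longer suffices to move between any two same-class reflections. Let $K$ be the conjugacy class of $t$; by hypotheses (1)--(3), $K$ is represented in $T$ and some other class $K' \neq K$ is also represented in $T$.

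First, I would use Hurwitz moves to group the reflections of $T$ by conjugacy class, placing a block of $K$-reflections at the right end preceded by a block of $K'$-reflections. If $t$ already appears in the $K$-block, I would bubble it to the last position via Hurwitz moves, using an adjacent $K'$-reflection as a temporary pivot whenever same-class neighbors fail to commute. If $t$ does not appear, the goal is instead to arrange an ending of the form $(\ldots, x, y)$ with $x \in K$ and $y^{-1} x y = t$; applying $\sigma_{\ell-1}$ then replaces this ending with $(\ldots, y, t)$, and we are done. The multiplicity conditions are exactly what is needed to produce such a pair $(x, y)$: having at least two elements from some class and at least one from another provides both a choice of $x \in K$ to place at position $\ell - 1$ and enough flexibility to maneuver a suitable $y$ into position $\ell$, possibly as a Hurwitz-conjugate of a reflection originally present in $T$.

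The main obstacle is to verify, in each of the three multiplicity regimes, that every $t \in K$ can in fact be realized as such a conjugate $y^{-1} x y$ using reflections reachable from $T$ via Hurwitz moves. Since $|G_6| = 48$ and the reflection classes are small, this is a finite check: for each pair $(K, K')$ consistent with conditions (1)--(3), one enumerates the conjugation orbits on $K$ under reflections of $K'$ (and of $K$ itself) and confirms that these orbits cover all of $K$. I would carry out this verification in Sage, following the same style as the calculations supporting Proposition \ref{G422_conjugation} for $G(4, 2, 2)$.
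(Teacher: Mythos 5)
Your overall strategy---reduce the problem to a finite manipulation at the right end of the factorization and discharge the combinatorial core to a Sage computation---is the same in spirit as the paper's, but the paper reduces to a different and more robust finite object: it verifies the length-$3$ case of each hypothesis by computing full Hurwitz orbits in Sage, and for longer factorizations it shunts a suitable triple (two elements of one class, one of another, with at least one in the class of $t$) into the last three positions and replays the length-$3$ moves there. Your reduction, to a single application of $\sigma_{\ell-1}$ on an ending $(\ldots,x,y)$ with $y^{-1}xy=t$, has a genuine gap in two places. First, the verification you propose (``the conjugation orbits on $K$ under reflections of $K'$ cover all of $K$'') does not match the mechanism you use: an orbit under a set of conjugations allows iterated conjugation, whereas one application of $\sigma_{\ell-1}$ realizes only one-step conjugates $y^{-1}xy$. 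If some $t\in K$ is reachable only as a two-step conjugate, your argument as written produces nothing; compare Proposition \ref{l2-orbits}, where in $G_4$ the orbit of a same-class pair introduces only \emph{one} new reflection, showing that one-step conjugation can fall well short of covering a class. Realizing iterated conjugates in the last position requires repeatedly feeding fresh conjugators into position $\ell$, which is exactly what a full orbit computation on the last three positions accomplishes and what a single $\sigma_{\ell-1}$ does not.

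Second, the set of admissible conjugators is not all of $K'\cup K$ but only the reflections actually occurring in $T$ together with their Hurwitz conjugates, and in the minimal instances of hypotheses (1)--(3) (e.g.\ exactly two elements of $S$ and one of $R_1$, at length $3$) this is a genuinely smaller and $T$-dependent set. A coverage check quantified over all of $K'$ therefore proves less than what your argument needs. Both defects are repaired simultaneously by the paper's device of computing the complete Hurwitz orbits of all length-$3$ factorizations satisfying each hypothesis and reading off the last coordinates; if you want to keep your two-position formulation, you would at minimum have to replace the proposed check with one that iterates conjugation starting only from the multiset of $T$ and tracks which conjugates can actually be positioned at the tail.
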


\begin{proof}
For each case, the length-$3$ factorization case is proven by direct calculation in Sage. Then, for the first case, for longer lengths, use Hurwitz moves to place two elements from $S$ and one element from $R'$ into the three right-most positions of the factorization. If elements from both $R_1$ and $R_1^{-1}$ appear in the factorization, and the desired element to be found in the right-most position is an element from one of these classes, choose an element from the same class as the desired element to appear in the right-most three positions. Then, by using the same Hurwitz moves in the respective length three factorization case, any element in the multiset of $T$ may be found in the right-most position.

The same strategy holds for the second and third cases, by placing the proper elements in the three right-most positions.
\end{proof}

By combining the results of Propositions \ref{G6_generate} and \ref{G6_last_place}, the following describes the conditions under which $G_6$ is class-searchable.

\begin{prop}
\label{G6_strong_last_place}
Let $T$ be a reflection factorization with at least two elements in $R'$ and at least two elements in $S$.
Then, there is a factorization whose right-most element is any element $t$ in the multiset of $T$, and whose first $\ell - 1$ elements from the left generate the group $G_6$, that is in the Hurwitz orbit of $T$.
\end{prop}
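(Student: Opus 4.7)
The plan is to bootstrap Proposition~\ref{G6_last_place} (which puts a desired element in the last position) with Proposition~\ref{G6_generate} (which characterizes when a set of reflections generates $G_6$), exploiting the ``at least two'' hypothesis on each class as the margin needed so that removing one element still leaves both $R'$ and $S$ represented.

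First I would verify that Proposition~\ref{G6_last_place} applies in its first form: $T$ has at least two elements in $S$ and, since $R' = R_1 \cup R_1^{-1}$ is represented at least twice in $T$ by hypothesis, at least one element in $R'$. Invoking Proposition~\ref{G6_last_place} then yields a factorization $T' = (r_1, \ldots, r_{\ell-1}, t)$ in the Hurwitz orbit of $T$.

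Next I would check that the prefix $(r_1, \ldots, r_{\ell-1})$ generates $G_6$. Hurwitz moves preserve the multiset of conjugacy classes, so this prefix has exactly the multiset of $T$ with one copy of the class of $t$ removed. A short case analysis then finishes the argument: if $t \in S$, the prefix still contains at least one element of $S$ (since $T$ had at least two) together with all of the $R'$-elements of $T$; if $t \in R'$, the prefix still contains at least one element of $R'$ together with all of the $S$-elements. In either case both $R'$ and $S$ are represented among $r_1, \ldots, r_{\ell-1}$, so Proposition~\ref{G6_generate} concludes that the prefix generates $G_6$. No substantive obstacle arises, since the real work is already packaged into the two preceding propositions; the present proposition is just the observation that the ``at least two'' hypothesis on each class is precisely what prevents relocating one element from wiping out either class from the prefix.
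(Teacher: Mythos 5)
Your proposal is correct and follows essentially the same route as the paper: move $t$ to the last position via Proposition~\ref{G6_last_place} (whose first case applies under the hypotheses), then observe that the ``at least two'' conditions guarantee the remaining $\ell-1$ factors still contain representatives of both $R'$ and $S$, so Proposition~\ref{G6_generate} gives that they generate $G_6$. Your case analysis on whether $t\in S$ or $t\in R'$ matches the paper's.
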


\begin{proof}
Use Hurwitz moves to group the reflections in $T$ by conjugacy class, with $t$ as the right-most element, which can be moved there by Proposition \ref{G6_last_place}.

If $t$ is in $S$, there is still at least one reflection from $S$ in the first $\ell - 1$ elements of $T$.
Therefore, since the first $\ell - 1$ elements still includes at least one element from $R'$ and at least one element from $S$, this smaller factorization generates $G_6$, and so the statement holds.

If $t$ is in $R'$, there is still at least one reflection from $R'$ in the first $\ell - 1$ elements of $T$.
If $t$ is the only element in the factorization from $R_1$ or $R_1^{-1}$, the first $\ell-1$ elements of $T$ still includes at least one element from $R'$ and at least one element from $S$ by hypothesis, and thus in both cases will still generate $G_6$. Therefore, the statement holds.
\end{proof}

\bigskip

\subsection{Factorizations generating $G_7$}
\label{g7 results}
The following observation describes more notation about the conjugacy classes in $G_7$.

\begin{observation}
The 22 reflections in $G_7$ split into 5 conjugacy classes: $R_1$, $R_1^{-1}$, $R_2$, $R_2^{-1}$, and $S$, where $R_1^{-1}$ contains the inverses of $R_1$ and $R_2^{-1}$ contains the inverses of $R_2$. Let $R' = R_1\cup R_1^{-1}$ and $R'' = R_2 \cup R_2^{-1}$.
\end{observation}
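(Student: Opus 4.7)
The plan here is brief, since the observation mostly consolidates facts already established in Section~\ref{g7_bg} and then fixes the shorthand $R'$ and $R''$. First I would verify by direct computation with the explicit matrices for $s$, $t$, $u$ that $G_7$ has order $144$ and contains exactly $22$ reflections (matrices with a one-dimensional fixed subspace). Second, I would compute the conjugation action of $G_7$ on this set of $22$ reflections and read off its orbits: a single orbit of size $6$ on the involutions (this is the class $S$), and four orbits of size $4$ on the order-$3$ reflections.

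The nontrivial point, relative to the dihedral situation in Berger's theorem, is that an order-$3$ reflection is not conjugate to its own inverse in $G_7$. This follows because conjugation preserves determinant, and an order-$3$ reflection with nontrivial eigenvalue $\omega$ (a primitive cube root of unity) has inverse with nontrivial eigenvalue $\omega^{-1} \ne \omega$. Consequently the four size-$4$ orbits pair up into two inverse-pairs of classes, which we label $(R_1, R_1^{-1})$ and $(R_2, R_2^{-1})$, with $R_i^{-1}$ defined to be the class of inverses of elements of $R_i$. The unions $R' = R_1 \cup R_1^{-1}$ and $R'' = R_2 \cup R_2^{-1}$ are then purely definitional.

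The only computational step is the orbit enumeration, which is immediate in Sage given the explicit generators; there is no conceptual obstacle, and one could even carry it out by hand using the fact that the $\pm 1$-eigenvalue structure separates $S$ from the order-$3$ reflections and the determinant separates $R_1 \cup R_2$ from $R_1^{-1} \cup R_2^{-1}$, leaving only the finer separation of $R_1$ from $R_2$ (and $R_1^{-1}$ from $R_2^{-1}$) to verify by an explicit conjugation check.
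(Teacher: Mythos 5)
Your proposal is correct. The paper offers no proof of this observation at all---it is a notation-fixing statement that consolidates facts already asserted in Section~\ref{g7_bg} (144 elements, 22 reflections, five classes with $|S|=6$ and four classes of size 4), and those facts are themselves established by computation, which is exactly the route you take. Your added conceptual point---that an order-$3$ reflection cannot be conjugate to its inverse because its determinant is a primitive cube root of unity and conjugation preserves determinant---is a nice touch the paper does not spell out, and your acknowledgment that separating $R_1$ from $R_2$ (which share the same determinant) requires an explicit conjugation check is accurate.
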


The following propositions follow the same structure as the $G(4, 2, 2)$ and $G_6$ sections to prove the conditions where $G_7$ is class-searchable.

\begin{prop}
\label{G7_generate}
Any combination of one or more reflections from each of $R'$, $R''$, and $S$ will generate the entire group $G_7$.
\end{prop}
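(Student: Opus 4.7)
The plan is to deduce the proposition directly from the reflection-subgroup lattice of $G_7$ recorded in Figure \ref{classes picture}, rather than by a direct Sage computation as in Propositions \ref{G422_generate} and \ref{G6_generate}. The subgroup of $G_7$ generated by any collection of reflections is, by definition, a reflection subgroup, so it must be one of the nodes appearing in that lattice. It therefore suffices to verify that every proper reflection subgroup of $G_7$ omits reflections from at least one of the three classes $R'$, $R''$, or $S$.

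Scanning the lattice, one checks this case by case. The subgroups $G_5$, $G_4$, $\mathbb{Z}_3 \times \mathbb{Z}_3$, and $\mathbb{Z}_3$ consist only of order-$3$ reflections and so contain nothing from $S$. The subgroups $G(4,2,2)$, $D_{2\times 4}$, $\mathbb{Z}_2 \times \mathbb{Z}_2$, and $\mathbb{Z}_2$ consist only of order-$2$ reflections drawn from $S$ and so contain nothing from $R'$ or $R''$. The two copies of $G_6$ have reflection classes $(S, R_1, R_1^{-1})$ and $(S, R_2, R_2^{-1})$, and so omit $R''$ or $R'$, respectively. Hence no proper reflection subgroup meets all three of $R'$, $R''$, $S$, and any reflection factorization containing at least one element from each must generate $G_7$.

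The main (and only) obstacle to this approach is trusting that Figure \ref{classes picture} is a complete and accurate enumeration of the reflection subgroups of $G_7$; once that is granted, the proposition is immediate. If a purely computational verification matching the style of Propositions \ref{G422_generate} and \ref{G6_generate} is preferred, one may instead enumerate the triples $(x, y, z) \in R' \times R'' \times S$ and check in Sage that each of them generates $G_7$, which is a finite and small computation.
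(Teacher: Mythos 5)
Your argument is correct, but it takes a different route from the paper: the paper's proof is a brute-force verification in Sage that each of the $8 \times 8 \times 6 = 384$ triples in $R' \times R'' \times S$ generates $G_7$, whereas you deduce the statement from the completeness of the reflection-subgroup lattice in Figure \ref{classes picture}. Your lattice argument is more conceptual and explains \emph{why} the proposition holds --- no proper reflection subgroup meets all three of $R'$, $R''$, and $S$ --- and it also covers arbitrary collections of reflections in one stroke rather than reducing to triples. Its cost is exactly the dependence you flag: the paper describes the subgroup lattice in Section \ref{g7_bg} but never formally proves its completeness, so your proof trades one computation (checking 384 triples) for another (verifying that Figure \ref{classes picture} lists every reflection subgroup of $G_7$ together with which of the 22 reflections each contains). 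The paper's direct check is self-contained and arguably the smaller computation; your version would be preferable if the lattice were independently established, since it then makes Propositions \ref{G422_generate} and \ref{G6_generate} corollaries of the same observation. Both are valid proofs of the stated result.
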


\begin{proof}
Proved by calculating the 384 possible combinations of reflections from $R'$, $R''$, and $S$ through Sage.
\end{proof}

\begin{prop}
\label{G7_weak}
Take a factorization $T$ with length $\ell\geq4$. For any $t$ in the multiset of $T$, if $T$ contains at least one element from $S$, $R'$, and $R''$, there is a factorization in the Hurwitz orbit of $T$ whose right-most element is $t$.
\end{prop}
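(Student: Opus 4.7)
The plan is to prove this by induction on $\ell$, mirroring the strategy used in the proofs of Propositions \ref{G422_last_place_weak} and \ref{G6_last_place}. The base case $\ell = 4$ is verified by direct computation in Sage: enumerate all length-$4$ reflection factorizations in $G_7$ that contain at least one representative from each of $S$, $R'$, and $R''$, and for each such factorization $T$ together with each reflection $t$ lying in a conjugacy class represented in $T$, check that the Hurwitz orbit of $T$ contains a factorization whose final entry is $t$.

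For the inductive step with $\ell \geq 5$, let $K$ denote the conjugacy class of $t$, which is one of $S$, $R_1$, $R_1^{-1}$, $R_2$, or $R_2^{-1}$. Each Hurwitz move $\sigma_i$ acts on the tuple of conjugacy classes of a factorization as the adjacent transposition swapping positions $i$ and $i+1$, so by a suitable sequence of Hurwitz moves we may permute the multiset of classes of $T$ into any desired arrangement. We rearrange $T$ so that its last four positions contain a representative from each of $S$, $R'$, and $R''$ and also contain a representative whose class is precisely $K$. This is always achievable: $S$, $R'$, and $R''$ are all represented in $T$ by hypothesis, and $K$ is represented because $t$ lies in the multiset of classes of $T$. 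When $K = S$, the $S$-representative placed in the last four already covers $K$; when $K \subseteq R'$, we choose the $R'$-representative placed in the last four to be an element of class $K$ (possible because $K$ is represented in $T$); similarly when $K \subseteq R''$.

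Call the resulting factorization $T'$. Its final four reflections form a length-$4$ sub-factorization satisfying the hypotheses of the base case, so the base case provides a sequence of Hurwitz moves that brings an element equal to $t$ into position $\ell$ of this sub-factorization. Because such moves use only $\sigma_{\ell-3}$, $\sigma_{\ell-2}$, and $\sigma_{\ell-1}$, they affect only the last four coordinates of $T'$, leaving the first $\ell - 4$ coordinates unchanged. The result is a factorization Hurwitz equivalent to $T$ whose rightmost entry is $t$, completing the induction.

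The main obstacle is the base case: the $\ell = 4$ verification must be exhaustive over all length-$4$ factorizations meeting the hypothesis and over each choice of $t$ in a represented class. The number of cases is finite and the verification is mechanical, of the same flavor as the computational base cases used in the preceding subsections, but it is sufficiently large that it must be handled by Sage rather than by hand.
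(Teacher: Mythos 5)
Your proposal is correct and follows essentially the same strategy as the paper: a Sage-verified base case at $\ell=4$, followed by using Hurwitz moves to gather representatives of $S$, $R'$, $R''$, and the class of $t$ into the last four positions and then applying the base-case moves there. Your explicit justification that Hurwitz moves permute the tuple of conjugacy classes by adjacent transpositions, and that the base-case moves touch only the last four coordinates, makes precise what the paper leaves implicit.
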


\begin{proof}
At length $\ell = 4$, the proof is done by direct calculation through Sage. For longer lengths, use Hurwitz moves to place four desired elements in the four right-most positions of the factorization as follows: at least one element from $S$, $R'$, and $R''$, as well as ensuring that at least one of the four elements are from the same conjugacy class as $t$. Then, by using the same Hurwitz moves as in the respective length $\ell = 4$ case, any $t$ in the multiset of $T$ may be found in the right-most position of the factorization.
\end{proof}

The following statement describes the conditions under which $G_7$ is class-searchable.

\begin{prop}
\label{G7_strong}
Take a factorization $T$ with length $\ell\geq4$, with the factorization consisting of at least one element from $S$, $R'$, and $R''$. Let $r$ be a reflection in one of the subsets that appears multiple times. If $t$ is a reflection in the same subset as $r$, then there is a factorization whose right-most element is $t$ and whose first $\ell-1$ elements from the left generate the group $G_7$ that is in the Hurwitz orbit of $T$.
\end{prop}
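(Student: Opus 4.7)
The plan is to mirror the strategy used in Propositions \ref{G422_strong_last_place} and \ref{G6_strong_last_place}: first apply the ``weak'' version (Proposition \ref{G7_weak}) to place $t$ at the right-most position, then verify that the remaining $\ell - 1$ entries still meet the hypothesis of Proposition \ref{G7_generate} and therefore generate $G_7$.

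First I would apply Proposition \ref{G7_weak} to $T$. The hypotheses are met: $\ell \geq 4$, the factorization $T$ contains at least one reflection from each of $S$, $R'$, and $R''$ by assumption, and the conjugacy class of $t$ is represented among the entries of $T$, since the subset $X$ containing $r$ and $t$ has at least two entries in $T$, which covers the relevant conjugacy class of $t$ so that conjugation via the other factors can realize $t$. This yields a factorization $T'$ in the Hurwitz orbit of $T$ whose right-most entry is $t$.

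Next I would check that after deleting this last entry, the first $\ell - 1$ entries of $T'$ still contain at least one reflection from each of $S$, $R'$, and $R''$. Because Hurwitz moves preserve the multiset of conjugacy classes, the counts of reflections in each of the three subsets are identical for $T$ and $T'$. Since the subset $X$ containing $t$ appears at least twice in $T'$, removing the single entry $t$ leaves at least one element of $X$ among the first $\ell - 1$ positions; the two subsets other than $X$ are not affected by removing $t$ and were nonempty in $T$ by hypothesis. Proposition \ref{G7_generate} then gives that the first $\ell - 1$ entries of $T'$ generate $G_7$, completing the proof.

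The argument is essentially bookkeeping once Propositions \ref{G7_weak} and \ref{G7_generate} are available. The only point that requires care is confirming that Proposition \ref{G7_weak} is applicable for every $t$ allowed by the hypothesis; this comes down to verifying that the conjugacy class of $t$ is represented in $T$, which is precisely what the multiplicity assumption on $X$ supplies in the intended applications.
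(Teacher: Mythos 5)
Your proposal is correct and follows essentially the same approach as the paper: apply Proposition \ref{G7_weak} to move $t$ to the right-most position, then observe that since $t$'s subset is represented at least twice, the first $\ell-1$ entries still contain an element from each of $S$, $R'$, and $R''$ and hence generate $G_7$ by Proposition \ref{G7_generate}. Your additional remark about Hurwitz moves preserving the multiset of conjugacy classes just makes the bookkeeping slightly more explicit than the paper's version.
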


\begin{proof}
Use Hurwitz moves to group the reflections in $T$ by conjugacy class, with $t$ as the right-most element, which can be moved there by Proposition \ref{G7_weak}. Since $t$ is in a subset with multiple representatives in the factorization, there is still at least one reflection from $S$, $R'$, and $R''$ in the first $\ell-1$ elements of the appropriate factorization in the Hurwitz orbit of $T$. Therefore, this factorization generates $G_7$, and so the statement holds.
\end{proof}

\subsection{Proof of main theorem}
\label{final proof}

Now that the large reflection subgroups of $G_7$ have been shown to be class-searchable, we prove our main theorem using the following lemma.

\begin{lemma} \label{main_lemma}
Let $G$ be a reflection subgroup of $G_7$, and let $n$ be the number of distinct reflection conjugacy classes in $G$. Suppose there exists an integer $m \geq n$ such that $G$ is class-searchable at lengths greater than $m$, and that Theorem \ref{main theorem} holds for factorizations of lengths $1, \ldots, m$ that generate $G$. Then Theorem \ref{main theorem} holds for factorizations of any length generating $G$.
\end{lemma}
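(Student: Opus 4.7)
The plan is to prove Theorem~\ref{main theorem} for factorizations generating $G$ by induction on length $\ell$. The base cases $\ell \in \{1,\ldots,m\}$ are supplied directly by the hypothesis. For the inductive step, I fix $\ell > m$, assume the theorem already holds for length-$(\ell-1)$ factorizations generating $G$ (this follows either from the hypothesis, when $\ell-1 \leq m$, or from the inductive assumption, when $\ell-1 > m$), and consider two length-$\ell$ factorizations $T$ and $V$ of the same element $g \in G$, both generating $G$, with a common multiset $M$ of conjugacy classes of $G$.

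Since $\ell > m \geq n$ and $G$ has only $n$ reflection conjugacy classes, pigeonhole forces some class $K$ to appear at least twice in $M$, and hence at least twice in each of $T$ and $V$. Fix any $x \in K$. Because $G$ is class-searchable at length $\ell$, I can apply Definition~\ref{class_searchable} to both factorizations: there exist $T' \sim T$ and $V' \sim V$ of the form
\[
T' = (r_1, \ldots, r_{\ell-1}, x), \qquad V' = (s_1, \ldots, s_{\ell-1}, x),
\]
where $(r_1,\ldots,r_{\ell-1})$ and $(s_1,\ldots,s_{\ell-1})$ each generate $G$.

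Now I strip the common last factor to obtain the length-$(\ell-1)$ factorizations $T^* = (r_1,\ldots,r_{\ell-1})$ and $V^* = (s_1,\ldots,s_{\ell-1})$. By construction, both generate $G$; both factor the element $gx^{-1}$; and both have the same multiset of conjugacy classes of $G$, namely $M$ with one copy of $K$ removed (Hurwitz moves preserve the multiset, and removing the same element $x \in K$ from each removes exactly one copy of $K$). The length-$(\ell-1)$ instance of Theorem~\ref{main theorem} for $G$ then gives $T^* \sim V^*$. Applying the corresponding sequence of Hurwitz moves to $T'$, all of which act only on positions $1,\ldots,\ell-1$ and therefore fix the final coordinate, converts $T'$ into $V'$. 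Hence $T \sim T' \sim V' \sim V$, completing the induction.

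The main obstacle is less computational than bookkeeping: I must confirm that each reduction step genuinely meets the hypotheses of the object it invokes. The fact that the truncated factorizations $T^*$ and $V^*$ still generate $G$ is exactly what the ``first $\ell-1$ factors generate $G$'' clause of class-searchability was designed to supply; equality of the factored elements is immediate from associativity; and equality of the conjugacy-class multisets follows because the stripped factor is literally the same element $x$ in both cases. Once these three conditions are in hand, the lift from $T^* \sim V^*$ to $T' \sim V'$ is automatic, since Hurwitz moves $\sigma_i$ with $i \leq \ell-2$ never touch position $\ell$.
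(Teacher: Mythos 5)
Your proof is correct and follows essentially the same route as the paper's: pigeonhole on the conjugacy classes, class-searchability to place a common element $x$ in the last position of both factorizations, and the inductive hypothesis applied to the truncated length-$(\ell-1)$ factorizations of $gx^{-1}$. If anything, you are slightly more explicit than the paper about why $T^*\sim V^*$ lifts to $T'\sim V'$ (Hurwitz moves on positions $1,\ldots,\ell-2$ fix the final coordinate), which is a welcome clarification rather than a deviation.
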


\begin{proof}
We must show that Theorem \ref{main theorem} holds for all factorizations of length greater than $m$ generating $G$. We proceed by induction: assume Theorem \ref{main theorem} is true in $G$ at factorization lengths $1, \ldots, k$ for some $k \geq m$. Let $T$ and $V$ be length $k + 1$ reflection factorizations of some element $g \in G$, with identical multisets of conjugacy classes which generate $G$. Then the proof is complete once we show $T$ and $V$ are Hurwitz equivalent.

Since $k + 1 > n$, there exists a reflection conjugacy class $R$ of $G$ such that $R$ is represented at least twice in both $T$ and $V$. Since $G$ is class-searchable at length $k+1$, it follows that $T$ is Hurwitz equivalent to some factorization $(r_1, \ldots, r_k, x)$ and $V$ is Hurwitz equivalent to $(s_1, \ldots, s_k, x)$, where $x \in R$ and $\langle r_i \rangle = \langle s_i \rangle = G$. Now $(r_1, \ldots, r_k)$ and $(s_1, \ldots, s_k)$ both factor $gx^{-1}$, generate $G$, and share the same multiset of conjugacy classes. By assumption, Theorem \ref{main theorem} applies to length $k$ factorizations of elements in $G$, so $(r_1, \ldots, r_k) \sim (s_1, \ldots, s_k)$. Thus
\[ T \sim (r_1, \ldots, r_k, x) \sim (s_1, \ldots, s_k, x) \sim V. \qedhere\]
\end{proof}

We now have all the necessary results to complete a proof of our main theorem.

\main*

\begin{proof}
Let $G$ be the reflection subgroup of $G_7$ generated by $T$ and $V$. If $G$ is isomorphic to the dihedral group $D_{2\times 4}$, then the theorem follows from Theorem \ref{berger_theorem}. If $G$ is abelian, we may invoke Proposition \ref{abelian_case}.

For the remaining possible reflection subgroups $G$ of $G_7$, we use Lemma \ref{main_lemma}. The group $G_4$ has $2$ reflection conjugacy classes, it is class-searchable at lengths greater than $4$ by Proposition \ref{last-place}, and we have determined by computation that Theorem \ref{main theorem} holds for factorizations of length up to $4$ which generate $G_4$. Thus, by Lemma \ref{main_lemma}, the theorem holds for factorizations of any length generating $G_4$.

Similarly, Propositions \ref{prop:g5-last-place}, \ref{G422_strong_last_place}, \ref{G6_strong_last_place}, and \ref{G7_strong} respectively show that if $G$ is $G_5$, $G(4,2,2)$, $G_6$, or $G_7$, then $G$ is class-searchable at all lengths greater than some integer $m$, where $m$ is greater than the number of reflection conjugacy classes of $G$. Computations in Sage show that in each case, Theorem \ref{main theorem} holds for all factorizations that generate $G$ and have length up to $m$. Thus we find by Lemma \ref{main_lemma} that Theorem \ref{main theorem} is true for factorizations of any length which generate $G_4$, $G_5$, $G(4,2,2)$, $G_6$, or $G_7$.

This covers all possible isomorphism classes of reflection subgroups $G \leq G_7$, completing the proof.
\end{proof}

\section{Standard forms in $G_4$}
\label{standard_forms}

This section describes an approach to find a representative factorization for every Hurwitz orbit produced by a reflection factorization of an element in $G_7$. Our main theorem and proof strategy gives both the necessary and sufficient conditions for determining if two factorizations are Hurwitz equivalent. However, one downside to this approach is that it gives no information about how to categorize Hurwitz orbits. In this section, we show one method to categorize all orbits for $G_4$, which may be expanded to find all the orbits of $G_7$.

To begin, the twenty-four elements in $G_4$ can be sorted into five different categories: the identity, the negative identity, eight reflections, eight Coxeter elements (the product of two different reflections from the same class), and six other elements with determinant 1 that are the product of two reflections from different classes. When considering a factorization of one of these elements, a standard form may be constructed for any Hurwitz orbit depending on the length of the factorization and which category of element is being factored.

The following definition describes the standard forms of factorizations of the eight reflections. All congruence equations in this definition consider congruence mod 3.

\begin{definition}
\label{ref_forms_list}
Let $x$ be a reflection in the $G_4$ conjugacy class $R_1$. We will define a reflection factorization with length $l\geq8$ called $[n,m]_{G_4}^x$, the \emph{standard factorization} of $x$ with $n$ reflections in $R_1$, $m$ in $R_1^{-1}$, and whose elements generate $G_4$. By considering the determinant of the product, we can deduce that $n + 2m \equiv 1 \mod 3$. Fix some $y \in R_1^{-1}$ where $y \neq x^{-1}$. There are five unique cases to treat.

\begin{enumerate}
\item If $n \equiv 2$ (equivalently $2m \equiv 2$, $m \equiv 1$), then the standard factorization of $x$ under these conditions is defined as
$$[n,m]_{G_4}^x = (\underbrace{x, \ldots, x}_{n-2}, x, y^{-1}, y, \underbrace{y, \ldots, y}_{m - 1}).$$
\item If $n \equiv 1$, or equivalently $2m \equiv 0$, $m \equiv 0$, and $m \geq 3$, then the standard factorization is defined as
$$[n,m]_{G_4}^x = (\underbrace{x, \ldots, x}_{n}, \underbrace{y, \ldots, y}_{m}).$$
\item If $n \equiv 1$ like the previous case, but $m = 0$, then the standard form is
$$[n,m]_{G_4}^x = (\underbrace{x, \ldots, x}_{n-4}, x, y^{-1}, y^{-1}, y^{-1}).$$
\item If $n \equiv 0$, or equivalently $2m \equiv 1$, $m \equiv 2$, \emph{and} $m \geq 5$,  then the standard factorization is defined as 
$$[n,m]_{G_4}^x = (\underbrace{x, \ldots, x}_{n}, x^{-1}, x^{-1}, \underbrace{y, \ldots, y}_{m-2}).$$
\item If $n \equiv 0$ like the previous case, but $m = 2$, then the standard form is
$$[n,m]_{G_4}^x = (\underbrace{x, \ldots, x}_{n-3}, x^{-1}, x^{-1}, y^{-1}, y^{-1}, y^{-1}). $$
\end{enumerate}

\end{definition}

With the five cases defined above, the next step is to prove that any factorization of a reflection in $G_4$ is Hurwitz equivalent to one of these standard forms.

\begin{prop}
\label{ref-forms-proof}
Let $x$ be a reflection in $R_1$. Let $T$ be a reflection factorization of $x$ with length $l\geq8$ whose elements generate $G_4$, with $n$ reflections in $R_1$ and $m$ in $R_1^{-1}$. Then, Definition \ref{ref_forms_list} lists a representative of the Hurwitz orbit of $T$.
\end{prop}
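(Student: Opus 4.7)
The plan is to reduce the proposition to an application of Theorem~\ref{main theorem}. Since that theorem has already been established for reflection factorizations generating $G_4$, two such factorizations are Hurwitz equivalent precisely when they share the same length, product, subgroup generated, and multiset of conjugacy classes. Hence it suffices to check that in each of the five cases of Definition~\ref{ref_forms_list} the candidate $[n,m]_{G_4}^x$ is itself a length-$l$ reflection factorization of $x$ that generates $G_4$, with $n$ reflections in $R_1$ and $m$ in $R_1^{-1}$; since $T$ has these same invariants by hypothesis, the two factorizations must then be Hurwitz equivalent.

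The bookkeeping invariants are routine. Since $y \in R_1^{-1}$ we have $y^{-1} \in R_1$, so in each case the counts of reflections from $R_1$ and $R_1^{-1}$ add up correctly to $n$ and $m$, and the total length is $n+m = l$. Generation of $G_4$ uses two facts already employed in the proof of Proposition~\ref{last-place} and derivable from Proposition~\ref{l2-orbits}: any two distinct reflections in one conjugacy class generate $G_4$, and any noninverse pair of reflections from different classes does too. In Cases 1, 2, 4, and 5 the pair $\{x,y\}$ witnesses generation (noninverse since $y \neq x^{-1}$); in Case 3 the pair $\{x, y^{-1}\}$ of distinct reflections in $R_1$ does so.

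The key step is verifying that the product is $x$. The standard forms are arranged so that each maximal block of identical reflections is consecutive, and any $y^{-1}$ factors appear either adjacent to a $y$ (whereupon $y^{-1}y = 1$) or in a block $(y^{-1})^3 = 1$ (using that $y$ has order $3$); similarly any $x^{-1}$ block combines with the preceding $x$-block. After these cancellations, the product in each case collapses to an expression of the form $x^a y^b$ or $x^a$ alone, and the congruence conditions imposed in the case description, combined with $x^3 = y^3 = 1$, are precisely those forcing $x^a = x$ and $y^b = 1$. For instance, Case 1 collapses to $x^{n-1}y^{m-1}$, which equals $x$ exactly when $n \equiv 2$ and $m \equiv 1 \pmod 3$.

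The main obstacle, and the reason Definition~\ref{ref_forms_list} splits into five cases rather than three, is handling the small-$m$ boundary cases: Case 3 with $m = 0$ and Case 5 with $m = 2$. The ``generic'' forms of Cases 2 and 4 break down here because they fail to generate $G_4$: they contain only reflections of the form $x^{\pm 1}$, which together span only a copy of $\mathbb{Z}_3$. The remedy is to insert the identity block $(y^{-1})^3$ and compensate in the $x$-block, preserving simultaneously the element, the multiset of conjugacy classes, the total length, and generation of $G_4$. Once all five cases are checked, Theorem~\ref{main theorem} immediately yields $T \sim [n,m]_{G_4}^x$.
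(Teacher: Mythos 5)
Your proposal is correct and follows essentially the same route as the paper: both reduce the proposition to Theorem \ref{main theorem} by checking that each standard form $[n,m]_{G_4}^x$ matches $T$ in product, generated subgroup, and multiset of conjugacy classes. You simply carry out in detail the verifications the paper labels as ``easy to check,'' including the useful observation of why the boundary cases $m=0$ and $m=2$ require separate forms.
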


\begin{proof}
To prove, we first want to show that $T$ is Hurwitz equivalent to $[n,m]_{G_4}^x$, and then that Definition \ref{ref_forms_list} describes the Hurwitz orbit of $T$. By construction, $T$ and $[n,m]_{G_4}^x$ both generate $G_4$ and have the same multiset of conjugacy classes. Since $x^3$ and $y^3$ are both equal to the identity in $G_4$, it is easy to check that each standard form is a factorization of $x$. Therefore, by Theorem \ref{main theorem}, $T$ is Hurwitz equivalent to $[n,m]_{G_4}^x$.
Next, we have to prove that Definition \ref{ref_forms_list} describes every Hurwitz orbit of a factorization of a reflection. Since the cases describe every combination of $n$ and $m$ that can construct a factorization that generates $G_4$, the list is complete.
\end{proof}

This section outlines the methodology used to classify the Hurwitz orbits for select elements. A list of the standard forms of the other elements in $G_4$ is displayed in Appendix \ref{Appendix_A}. This strategy was also considered for the other complex reflection groups discussed in this paper, although the number of cases to treat in these larger order groups proved to be prohibitively large to achieve any meaningful progress. For example, when considering just the reflections in the group $G_5$, it would be necessary to treat the separate cases for each combination of the four conjugacy classes in $G_5$, which would have significantly more representatives than those in Definition \ref{ref_forms_list}. While we believe this categorization is achievable, a full proof would require intensive amounts of time (and paper), while being unnecessary towards the understanding of our main theorem.

\newpage

\appendix
\section{Additional Standard Forms}
\label{Appendix_A}

The following definition lists the standard forms of the elements of $G_4$ not included in Section \ref{standard_forms}, that is, the sixteen non-reflection elements in $G_4$.

\begin{definition}

With order six, Coxeter elements are partitioned into two groups: $C_1 = \{c_3,c_4,c_5,c_6\}$ with determinant $-e^{\frac{5\pi i}{3}};$ $C_2 = \{c_1,c_2,c_7,c_8\}$ with determinant $e^{\frac{10 \pi i}{3}}$, where elements in $C_1$ are products of two reflections in $R_2$ and elements in $C_2$ are products of two reflections in $R_1$.
Coxeter elements:

     In a factorizations of $c \in C_1$, then $n + 2m \equiv 1 \pmod{3}$. 

    \begin{itemize}
    \item If $n \equiv 0 \pmod{3}, m \equiv 2 \pmod{3}:$ 

    Standard form: $(\underbrace{y_1^{-1},\cdots, y_1^{-1}}_\text{n}, y_1,y_2, \underbrace{y_2, \cdots, y_2}_\text{m-2})$ where $y_1 \cdot y_2 = c$ and $y_1,y_2 \in R_2$.

    \item If $n \equiv 1 \pmod{3}, m \equiv 0 \pmod{3}:$

    Standard form for $m \geq 3$: $(\underbrace{y_1^{-1},\cdots, y_1^{-1}}_\text{n-1},y_1^{-1}, y_1,y_1,y_2, \underbrace{y_2, \cdots, y_2}_\text{m-3})$ where $y_1 \cdot y_2 = c$ and $y_1,y_2 \in R_2$.

    Standard form for $m = 0$: $(\underbrace{y_1^{-1},\cdots, y_1^{-1}}_\text{n-4},y_1^{-1}, y_1^{-1},y_2^{-1},y_2^{-1})$ where $y_1\cdot y_2 = c$ and $y_1,y_2 \in R_2$.

    \item If $n \equiv 2 \pmod{3}, m \equiv 1 \pmod{3}:$

    Standard form: $(\underbrace{y_1^{-1}, \cdots, y_1^{-1}}_\text{n-2}, y_1^{-1}, y_1^{-1},y_2, \underbrace{y_2, \cdots, y_2}_\text{m-1})$ where $y_1\cdot y_2 = c$ and $y_1,y_2 \in R_2$.  

\end{itemize}

And for the factorizations of $c \in C_2$, $n + 2m \equiv 2 \pmod{3}$.

  \begin{itemize}
    \item If $n \equiv 0 \pmod{3}, m \equiv 1 \pmod{3}:$

    Standard form: for $m \geq 4$: $(\underbrace{x_1, \cdots, x_1}_\text{n}, x_1^{-1}, x_1^{-1}, x_2^{-1}, x_2^{-1}, \underbrace{x_2^{-1}, \cdots, x_2^{-1}}_\text{m-4})$ with $x_1,x_2 \in C_1$, $x_1 \cdot x_2 = c$.

    Standard form: for $m = 1$: $(\underbrace{x_1, \cdots, x_1}_\text{n-3}, x_1, x_2, x_2, x_2^{-1})$ with $x_1,x_2 \in C_1$, $x_1 \cdot x_2 = c$.

    \item If $n \equiv 1 \pmod{3}, m \equiv 2 \pmod{3}:$

    Standard form: $(\underbrace{x_1, \cdots, x_1}_\text{n-1}, x_1, x_2^{-1},x_2^{-1}, \underbrace{x_2^{-1}, \cdots, x_2^{-1}}_\text{m-2})$.

    \item If $n \equiv 2 \pmod{3}, m \equiv 0 \pmod{3}:$

    Standard form: $(\underbrace {x_1, \cdots, x_1}_\text{n-2}, x_1, x_2, \underbrace{x_2^{-1}, \cdots, x_2^{-1}}_\text{m})$.

  \end{itemize}

The next element to check is the identity element, $o_1$, which naturally has determinant 1. It has the following standard forms.

\begin{itemize}
    \item If $n \equiv 0 \pmod{3}, m \equiv 0 \pmod{3}:$

    Standard form: $(\underbrace{x,\cdots,x}_\text{n}, \underbrace{y,\cdots,y}_\text{m})$ with $x\in {R_1}, y\in {R_2}$.

    \item If $n \equiv 1 \pmod{3}, m \equiv 1 \pmod{3}:$

    Standard form: $(\underbrace{x,\cdots,x}_\text{n-1},x, x^{-1} \underbrace{y,\cdots,y}_\text{m-1})$ with $x\in {R_1}, y\in {R_2}$.

    \item If $n \equiv 2 \pmod{3}, m \equiv 2 \pmod{3}:$

    Standard form: $(\underbrace{x,\cdots,x}_\text{n-2},x_2, x_1, x_1^{-1}, x_2^{-1} \underbrace{y,\cdots,y}_\text{m-2})$ with $x,  x_1, x_2 \in {R_1}, y\in {R_2}$.

    \end{itemize}

The next element is the negative identity. This is only one element, $o_7$, of order two with determinant $1$. So we have $n + 2m \equiv 0 \pmod{3}$. Note that $o_7 = x_1\cdot x_2\cdot x_3$ where $x_1,x_2,x_3 \in {R_1}$. Also, since $o_7$ has order $2$, $x_1\cdot x_2\cdot x_3 = x_3^{-1}\cdot x_2^{-1}\cdot x_1^{-1}$. For $n+m = l \geq 4:$

    \begin{itemize}
    \item If $n \equiv 0 \pmod{3}, m \equiv 0 \pmod{3}:$

    Standard form for $n \geq 3$: $(\underbrace{x_1, \cdots, x_1}_\text{n-3}, x_1,x_2,x_3, \underbrace{x_3^{-1}, \cdots, x_3^{-1}}_\text{m})$.
    Or:

    Standard form for $n = 0$: $(x_3^{-1},x_2^{-1},x_1^{-1}, \underbrace{x_1^{-1}, \cdots, x_1^{-1}}_\text{m-3})$.

    \item If $n \equiv 1 \pmod{3}, m \equiv 1 \pmod{3}:$

    Standard form for $n = 1$: $(x_1, x_2^{-1}, x_2^{-1}, x_3^{-1}, x_3^{-1}, \underbrace{x_3^{-1}, \cdots, x_3^{-1}}_\text{m-4})$.

    Standard form for $n \geq 4$: $(\underbrace{x_1,\cdots, x_1}_\text{n-4}, x_1, x_2,x_3, x_3, x_3^{-1},\underbrace{x_3^{-1},\cdots, x_3^{-1}}_\text{m-1})$.

    \item If $n \equiv 2 \pmod{3}, m \equiv 2 \pmod{3}:$

    Standard form: $(\underbrace{x_1,\cdots, x_1}_\text{n-2}, x_1, x_2,x_3^{-1}, x_3^{-1},\underbrace{x_3^{-1},\cdots, x_3^{-1}}_\text{m-2})$.

    \end{itemize}

The final elements to check consist of the other six elements with determinant $1$. We call these elements $\{o_2, o_3, o_4, o_5,o_6,o_8\}$. So $n + 2m \equiv 0 \pmod{3}$. For $n+m = l \geq 3:$

    \begin{itemize}
    \item If $n \equiv 0 \pmod{3}, m \equiv 0 \pmod{3}:$

    Standard form for $m \geq 3$: $(\underbrace{x_1, \cdots, x_1}_\text{n}, x_1^{-1}, x_1^{-1}, y_1, \underbrace{y_1, \cdots, y_1}_\text{m-3})$ where $x_1 \in {R_1}, y_1 \in {R_2}, x_1\cdot y_1 = o$ for some $o$ in the set.

    Standard form for $m = 0$: $(\underbrace{x_1, \cdots, x_1}_{n-3}, x_1, y_1^{-1}, y_1^{-1})$ where $x_1 \in {R_1}, y_1 \in {R_2}, x_1\cdot y_1 = o$ for some $o$ in the set.

    \item If $n \equiv 1 \pmod{3}, m \equiv 1 \pmod{3}:$

    Standard form : $(\underbrace{x_1, \cdots, x_1}_{n-1}, x_1, y_1, \underbrace{y_1, \cdots, y_1}_{m-1})$ where $x_1 \in {R_1}, y_1 \in {R_2}, x_1\cdot y_1 = o$ for some $o$ in the set.

    \item If $n \equiv 2 \pmod{3}, m \equiv 2 \pmod{3}:$

    Standard form: $(\underbrace{x_1, \cdots, x_1}_\text{n-2}, x_1^{-1}, x_1^{-1},y_1^{-1}, y_1^{-1}, \underbrace{y_1, \cdots, y_1}_\text{m-2})$ where $x_1 \in {R_1}, y_1 \in {R_2}, x_1\cdot y_1 = o$ for some $o$ in the set.

\end{itemize}
\end{definition}

In conjunction with Section \ref{standard_forms}, every possible Hurwitz orbit of a reflection factorization of an element in $G_4$ has been listed.

\end{document}